\crefname{equation}{eq.}{equations}
\Crefname{equation}{Eq.}{Equations}
\providecommand{\keywords}[1]
{
  \small	
  \textbf{\textit{Keywords---}} #1
}
\date{\vspace{-5ex}}
\newcounter{mythm}
\newcounter{myrem}
\newcounter{myexa}
\newcounter{mypro}
\newcounter{mycol}
\newcounter{mydef}
\newtheorem{definition}[mydef]{Definition}
\newtheorem{remark}[myrem]{Remark}
\newtheorem{example}[myexa]{Example}
\newtheorem{proposition}[mypro]{Proposition}
\newtheorem{corollary}[mycol]{Corollary}
\newcommand{\Divergence}{\mathcal{D}}
\newcommand{\BregmanLogDet}{\mathcal{D}_\textnormal{LD}}
\newcommand{\inv}{^{-1}}
\newcommand{\invhalf}{^{-\half}}
\newcommand{\transp}{^\top}
\newcommand{\invtransp}{^{-\top}}
\newcommand{\Hermitian}{^*}
\newcommand{\invHermitian}{^{-*}}
\newcommand{\given}{\,|\,}
\newcommand{\trace}[1]{\Tr\big( #1 \big)}
\newcommand{\logdet}[1]{\log \det( #1 )}
\newcommand{\half}{{\frac{1}{2}}}
\newcommand{\Sn}{\mathbb{H}^n}
\newcommand{\Snp}{{\Sn_+}}
\newcommand{\Snpp}{{\Sn_{++}}}
\newcommand{\Realn}{\mathbb{R}^n}
\newcommand{\Rmn}{\mathbb{R}^{m\times n}}
\newcommand{\Rnk}{\mathbb{R}^{n\times k}}
\newcommand{\Cnr}{\mathbb{C}^{n\times r}}
\newcommand{\Crr}{\mathbb{C}^{r\times r}}
\DeclareMathOperator{\diag}{diag}
\DeclareMathOperator{\Tr}{trace}
\DeclareMathOperator{\rank}{rank}
\DeclareMathOperator{\domain}{dom}
\DeclareMathOperator{\relint}{ri}
\DeclareMathOperator*{\minimise}{minimise}
\DeclareMathOperator*{\subto}{s.t.}
\newcommand{\MATLAB}{\textsc{Matlab}\xspace}
\newcommand{\MatVec}{\pi}
\newsavebox{\@brx}
\newcommand{\llangle}[1][]{\savebox{\@brx}{\(\m@th{#1\langle}\)}%
  \mathopen{\copy\@brx\mkern2mu\kern-0.9\wd\@brx\usebox{\@brx}}}
\newcommand{\rrangle}[1][]{\savebox{\@brx}{\(\m@th{#1\rangle}\)}%
  \mathclose{\copy\@brx\mkern2mu\kern-0.9\wd\@brx\usebox{\@brx}}}
\newcommand*{\ldblbrace}{\{\mskip-5mu\{}
\newcommand*{\rdblbrace}{\}\mskip-5mu\}}
\newcommand{\BregmanIndexSet}[1]{\mathcal{I}_{ #1 }^\star}
\newcommand{\InverseBregmanIndexSet}[1]{\mathcal{J}_{ #1 }^\star}
\newcommand{\NysIndefLabel}{\text{Indef}}
\newcommand{\Nystrom}[1]{{#1}^\text{Nys}\langle\Omega\rangle}
\newcommand{\NystromIndefinite}[1]{{#1}^\NysIndefLabel\langle\Omega\rangle}
\newcommand{\tsvd}[2]{\llbracket #1 \rrbracket_{#2}}
\newcommand{\tsvdks}[2]{\llbracket #1 \rrbracket_{#2}^\textnormal{Kry}}
\newcommand{\BregTruncR}[2]{\llangle #1 \rrangle_{ #2 }}
\newcommand{\BregTrunc}[1]{\BregTruncR{ #1 }{r}}
\newcommand{\BregTruncRev}[1]{\ldblbrace #1 \rdblbrace_r}
\newcommand{\BregTruncPositive}[1]{\BregTrunc{ #1 }^\oplus}
\newcommand{\BregTruncNegative}[1]{\BregTrunc{ #1 }^\ominus}
\newcommand{\BregmanApproxAlpha}{\tilde E_r(\alpha)}
\newcommand{\BregmanApproxAlphaPositive}{\tilde E_{\rplus}(\alpha)}
\newcommand{\BregmanApproxAlphaNegative}{\tilde E_{\rminus}(\alpha)}
\newcommand{\rplus}{r^\oplus}
\newcommand{\rminus}{r^\ominus}
\newcommand{\rplusalpha}{\rplus(\alpha)}
\newcommand{\rminusalpha}{\rminus(\alpha)}
\newcommand{\PrecondSVD}[1]{{#1}_r^\textnormal{SVD}}
\newcommand{\PrecondSVDKS}[1]{{#1}_r^\textnormal{KrySVD}}
\newcommand{\PrecondBreg}[1]{{#1}_r^\textnormal{Br}}
\newcommand{\PrecondBregAlpha}[2]{{#1}_r^{\textnormal{Br}}({#2})}
\newcommand{\PrecondNys}[1]{{#1}_r^\textnormal{Nys}}
\newcommand{\PrecondNysIndef}[1]{{#1}_r^\NysIndefLabel}
\newcommand{\PrecondBregRev}[1]{{#1}_r^\textnormal{RBr}}
\newcommand{\CSVPrecondBregAlpha}[1]{\PrecondBregAlpha{S}{\alpha}}
\algnewcommand\algorithmicoutput{\textbf{Output:}}
\algnewcommand\Output{\item[\algorithmicoutput]}
\DeclareUrlCommand\UScore{}
\newcommand{\expUScore}{%
  \expandafter\expandafter\expandafter
  \UScore
  \expandafter\expandafter\expandafter
}
\newtheorem{theorem}[mythm]{Theorem}
\title{
A New Matrix Truncation Method for Improving Approximate Factorisation Preconditioners
}
\author[1]{Andreas A. Bock\thanks{Corresponding author: \texttt{aasbo@dtu.dk}.}}
\author[1]{Martin S. Andersen\thanks{\texttt{mskan@dtu.dk}.}}
\affil[1]{Department of Applied Mathematics and Computer Science, Technical University of Denmark}
\begin{document}
\maketitle

\begin{abstract}
\noindent
In this experimental work, we present a general framework based on the Bregman log
determinant divergence for preconditioning
Hermitian positive definite linear systems. We explore this divergence
as a measure of discrepancy between a preconditioner and a matrix.
Given an approximate factorisation of a given matrix, the proposed framework 
informs the construction of a low-rank approximation of the typically indefinite 
factorisation error. The resulting preconditioner is therefore a sum of a Hermitian positive 
definite matrix given by an approximate factorisation plus a low-rank matrix.
Notably, the low-rank term is not generally obtained as a truncated singular value 
decomposition (TSVD). This framework leads to a new truncation where principal directions
are not based on the magnitude of the singular values, and we prove that such
truncations are minimisers of the aforementioned divergence.
We present several numerical examples showing that the proposed preconditioner 
can reduce the number of PCG iterations compared to a preconditioner constructed
using a TSVD for the same rank.
We also propose a heuristic to approximate the proposed preconditioner in the
case where exact truncations cannot be computed explicitly (e.g. in a large-scale
setting) and demonstrate its effectiveness over TSVD-based approaches.
\end{abstract}

\keywords{preconditioner, low-rank approximation, Bregman log determinant divergence.}

\section{Introduction}

We construct a preconditioner for the iterative solution of the following 
system:
\begin{equation}\label{eq:Sxb}    
Sx = b, \quad S \in \Snpp,
\end{equation}
where $\Snpp$ denotes the cone of  $n\times n$ Hermitian positive definite
matrices (HPD). While it may be too expensive to factorise $S$, there may
exist matrices
$Q$ such that $QQ\Hermitian \approx S$ that can be computed cheaply in
terms of both storage and computation time \cite{scott2023algorithms}.
Such a matrix can be used as a preconditioner, transforming \eqref{eq:Sxb} into 
the mathematically
equivalent problem
\begin{equation}\label{eq:Sxb:precond}
Q\inv S Q\invHermitian z  = Q\inv b, \quad Q\Hermitian x = z,
\end{equation}
for which the conjugate gradient (CG) method converges faster than
for \cref{eq:Sxb}. One such example when $S$ is sparse is an incomplete
Cholesky factorisation \cite{greenbaum1997iterative,golub2013matrix}, which
is used in many applications across scientific computing.
The generally indefinite matrix
\begin{equation}\label{eq:E}
E = S - QQ\Hermitian
\end{equation}
represents the approximation factorisation error, which we assume can be 
well-approximated by a low-rank matrix. The aim is therefore to improve 
approximate factorisation preconditioners by incorporating a low-rank 
approximation to $Q\inv EQ\invHermitian$.
In the context of low-rank matrix approximations, the tacit assumption is 
often that the objective is to find its dominant directions with respect 
to the spectral and Frobenius norms, thanks to the Young-Eckhart-Mirksy
theorem \cite{eckart1936approximation}. This leads to the well-known
\emph{truncated singular value decomposition} $\tsvd{X}{r}$ of $X$ to 
order $r$. For square matrices, the $\ell_2$ condition number
\[
\kappa_2(S) = \frac{\sigma_1(S)}{\sigma_n(S)},
\]
is also based on a notion of nearness in these norms. It is sometimes
used as a metric for evaluating the fitness of preconditioners due to 
its presences in the well-known upper bound for the error between the
true solution $x$ to \cref{eq:Sxb} and the $k\textsuperscript{th}$ 
iterate $x_k$ of CG \cite{greenbaum1997iterative}:
\[
\| x - x_k \|_S \leq 2\Big(\frac{\sqrt{\kappa_2(S)} -1}{\sqrt{\kappa_2(S)} +1}\Big)^k\|x - x_0\|_S,
\]
for some initial guess $x_0$.
In this paper, we study a different nearness measure for low-rank approximations,
namely the Bregman log determinant matrix divergence \cite{bregman1967relaxation}.
\begin{definition}\label{def:Bregman_divergence}
The Bregman matrix divergence $\Divergence_\phi:\domain \phi\times\relint \domain\phi\rightarrow [0, \infty)$
associated with a proper, continuously-differentiable, strictly convex seed function $\phi$ is defined as
\begin{align*}
    \Divergence_\phi(X,Y) & = \phi(X) - \phi(Y) - \langle \nabla\phi(Y), X - Y\rangle.
\end{align*}
The log determinant matrix divergence, induced by $\phi(x) = -\log\det X$, is given by
\[
\BregmanLogDet(X, Y) = \trace{XY\inv} - \logdet{XY\inv} - n,
\]
where in this case $\domain \phi = \Snpp$.
\end{definition}
A key property of the Bregman log determinant divergence
is its invariance to congruence transformations, i.e. for $X,Y\in\Snpp$ and 
invertible $Q$ we have
\begin{equation}\label{eq:Bregman:invariance}
\BregmanLogDet(X,Y) = \BregmanLogDet(Q X Q\Hermitian, Q Y Q\Hermitian).
\end{equation}
This invariance is central to our contribution in this paper. Indeed, we
can write the matrix $S$ from \cref{eq:Sxb} in terms of a given approximate
factorisation $QQ\Hermitian \approx S$ as follows:
\begin{equation}\label{eq:S_with_Error}
S = Q(I + \tilde E)Q\Hermitian, \quad\text{where}\quad \tilde E = Q\inv E Q\invHermitian.
\end{equation}
This leads us to study the candidate preconditioners of the form
\begin{equation}\label{eq:PC_form}
P = Q(I + W)Q\Hermitian\in\Snpp, \quad \rank W \leq r, \quad W\in \Sn,
\end{equation}
where $\Sn$ denote the space of $n\times n$ Hermitian matrices. Thanks to
\cref{eq:Bregman:invariance,eq:PC_form}, we therefore have
\begin{equation}
\BregmanLogDet(S, P) = \BregmanLogDet(I + \tilde E, I + W).
\label{eq:I+W}
\end{equation}
The aim is to determine the variable $W$ by minimising the divergence above
over a suitable set of low-rank matrices. It is not known which rank
$r$ leads to the right balance between acceleration of PCG and computational
cost (e.g. storage, application of the inverse of $P$) for a given setting. However,
the motivation for our work is that $Q$ may be inexpensive to compute (or 
already implemented), and if $\tilde E$ is approximately low-rank, then the 
preconditioners \cref{eq:PC_form} may be of interest in practice.\\

This leads to the main contribution of this work. We will show that such
minimisers are generally not given as a rank $r$ TSVD of the matrix 
$\tilde E$, giving rise to a new notion of a matrix truncation which we call
a \emph{Bregman log determinant (BLD) truncation}. We provide an 
explicit characterisation of the solution and highlight the effect of the 
indefiniteness of $\tilde E$ and how it differs from a TSVD. This approximation
is covered in \cref{sec:approximation_problem}, and the resulting proposed
preconditioner analysed in \cref{sec:preconditioner}.
\cref{sec:Bregman_directions} presents an approximation of the BLD truncation 
suitable for larger instances, or when access to $S$ is only given by the 
matrix-vector product $x \mapsto S x$. The associated approximate preconditioner
is discussed in \cref{sec:preconditioner:approx}.
\Cref{sec:numerical_results} contains numerical results, where the focus
is on experimenting with the truncations presented in this paper and comparing
PCG iteration for the associated preconditoners to a preconditioner based on the
TSVD. \Cref{sec:numerical_results:ichol} presents results where $Q$ stems
from an incomplete Cholesky factorisation of matrices $S$ from the SuiteSparse
Matrix Collection of modest size, where the truncations can be computed exactly.
In \cref{sec:numerical_results:large}, we consider larger instances where these
truncations cannot be computed cheaply to high accuracy, and instead compares the approximate 
preconditioner from \cref{sec:preconditioner:approx} with preconditioners with
low-rank approximations based on randomised approximations \cite{martinsson2020randomized}.
\Cref{sec:summary} contains a summary and discusses the need for a theoretical
investigation between the Bregman log determinant divergence and convergence of
iterative methods to support the empirical evidence shown in this paper.

\subsection{Related work}\label{sec:literature}

Much work has been dedicated to the construction of explicit preconditioners
based on minimisation of various quantities, the most well-known being the
Frobenius norm
\cite{benson1982iterative,gould1995approximate,benzi1999comparative}. The idea
is to minimise the \emph{stability} \cite{benzi2002preconditioning} 
\[
\| I - MA\|_F^2
\] 
(or $\| I - AM\|_F^2$ in the case of right-preconditioning) over some set of
inverse preconditioners $M$ with sparsity pattern $\mathcal{S}$. This expression
can be decomposed into a sum, leading to $n$ linear least-squares problems that
can be solved indepedently. It is difficult to prescribe a good sparsity pattern
$\mathcal{S}$, so adaptive approaches \cite{cosgrove1992approximate,chow1998approximate}
have been developed as a result, with the \emph{sparse approximate inverse} (SPAI)
\cite{grote1997parallel} being one of the most well-known.
Another functional of interest in measuring the quality of a preconditioner
is the so-called \emph{accuracy} of an approximate factorisation $\hat L \hat U
\approx A$ given by
\[
\| A - \hat L \hat U\|_F^2.
\]
This quantity alone can be predictive of required PCG iterations for some
matrices (e.g. Stieltjes matrices), but this is not true in general 
\cite{benzi2002preconditioning}, and it is not always clear which
functionals lead to good preconditioners. For symmetric matrices,
\cite{kolotilina1993factorized,kolotilina1995factorized} introduced the
factored approximate inverse preconditioner (FSPAI) also based on 
$\| I - MA\|_F$, but uses a factored form of $M=\hat L\hat L\transp$ (with
prescribed sparsity pattern) to preserve symmetry 
for use with the conjugate gradient method.
\cite{kaporin1990preconditioning,kaporin1994new} take a 
similar approach using a different functional as objective.
We refer to the recent work \cite{scott2023algorithms} for overviews of relevant 
procedures, in particular sparse and incomplete factorisations.\\

The design of preconditioners using a combination of approximate factorisations
and low-rank approximations has been studied before in \cite{higham2019new}.
In this reference the authors assume that $S$ is a general $n\times n$ non-singular
matrix and that an approximate LU factorisation is available of the form
\[
S = L U + \Delta S.
\]
The authors investigate the numerical rank of the error matrix
\[
E = U\inv L\inv \Delta S - I,
\]
and propose the preconditioner $(I + \tsvd{E}{r})U\inv L\inv$. Although we 
limit our attention to Hermitian matrices, the work here aims to leverage the
same  observation.\\

Bregman divergences 
\cite{bregman1967relaxation,amari2010information,amari2016information} have
been studied in connection with matrix approximations in numerous contexts 
including general matrix nearness problems \cite{dhillon2008matrix}, 
non-negative matrix factorisations 
\cite{sra2005generalized}, computational finance \cite{nock2012mining},
sparse inverse covariance estimation \cite{bollhofer2019large} and machine
learning \cite{banerjee2005clustering,pmlr-v119-cilingir20a,kulis2009low}.
The present work is most similar to \cite{schafer2021sparse}, where the 
authors find a sparse inverse Cholesky factorisation \cite{schafer2021sparse}
of the covariance matrix $\Theta$ by minimising the Kullback--Leibler divergence
\[
D_\mathrm{KL}\big(\Theta, (LL\transp)\inv\big)
\]
over a set of factors $L$.
For finite-dimensional Gaussian densities, this divergence is exactly the
Bregman log determinant divergence in \cref{def:Bregman_divergence}. We also
seek closeness in this sense, but for the purposes of finding a preconditioner,
and we parameterise the variable using $Q$ and a low-rank matrix (cf.
\cref{eq:PC_form}).\\

In a previous paper \cite{bock2023preconditioner}, the authors studied \cref{eq:Sxb}
where $S$ was assumed to be of the form
\begin{equation}\label{eq:Sxb:PSD}
S = Q(I + G) Q\Hermitian,
\end{equation}
where $G$ was assumed to be Hermitian \emph{positive semi-definite}
(denoted by $\Snp$). Problems with such structure arise in, for example, 
variational data assimilation \cite{tabeart2021saddle} where $QQ\Hermitian$
is a Cholesky factorisation of a matrix corresponding to a model and $G$
stems from an observation operator. Given the structure \cref{eq:Sxb:PSD}, the
authors studied preconditioners of the form
\[
Q(I + X)Q\Hermitian, \quad X \in \Snp,\quad \rank{X} \leq r,
\]
and showed that the preconditioner
\begin{equation}\label{eq:scaled}
\PrecondSVD{S} = Q(I + \tsvd{G}{r})Q\Hermitian
\end{equation}
is a minimiser of Bregman log determinant divergence to $S$.
Further, it was shown that when $G$ does not have full rank, 
$\PrecondSVD{S}$ minimises the condition number
$\kappa_2({\PrecondSVD{S}}\invhalf S {\PrecondSVD{S}}\invhalf)$
among all such candidate preconditioners. However,
this result only applies when $G$ is Hermitian positive semi-definite 
with non-trivial kernel. The efforts here generalise the setting in 
\cite{bock2023preconditioner} in the following sense. $QQ\Hermitian$
is taken to be an approximate factorisation of $S$, and writing $S$ 
of the form in \cref{eq:S_with_Error} results in the term $\tilde E$,
which is indefinite in general, leading to the study of the 
preconditioners \cref{eq:PC_form}. \Cref{eq:scaled} corresponds to the 
case where $\tilde E$ is positive semi-definite.
\section{Approximation problem}\label{sec:approximation_problem}

In this section we compute a low-rank approximation to $\tilde E$ in the
sense of the Bregman log determinant divergence. We write
\begin{subequations}\label{eq:G_and_W}
\begin{align}
\tilde E & = V \Theta V\Hermitian = \begin{bmatrix} v_1|\ldots | v_n\end{bmatrix} \begin{bmatrix}
\theta_1 & & \\
& \ddots & \\
& & \theta_n
\end{bmatrix}\begin{bmatrix} v_1|\ldots | v_n\end{bmatrix}\Hermitian,\label{eq:G_and_W:G}\\
W & = U \Lambda U\Hermitian =
\begin{bmatrix} u_1|\ldots | u_n\end{bmatrix} \begin{bmatrix}
\lambda_1 & & \\
& \ddots & \\
& & \lambda_n
\end{bmatrix}\begin{bmatrix} u_1|\ldots | u_n\end{bmatrix}\Hermitian,\label{eq:G_and_W:W}
\end{align}
\end{subequations}
where we assume $U$ and $V$ are unitary. In view of \cref{eq:I+W}, the approximation
problem that we are interested in is
\begin{subequations}\label{eq:Bregman_truncation:problem}
\begin{align}
\minimise_{W\in\Sn}\quad & \BregmanLogDet(I+\tilde E, I+W)\\
\subto\quad
& I + W \in \Snpp,\\
& \rank(W) \leq r.
\end{align}
\end{subequations}

We can write the Bregman log determinant divergence in terms of the
decompositions in \cref{eq:G_and_W}:
\begin{equation}\label{eq:Divergence:general}
\BregmanLogDet(I + \tilde E, I + W) = \sum_{i=1}^n\sum_{j=1}^n (u_i\Hermitian v_j)^2 \Big( \frac{1 + \theta_j}{1 + \lambda_i} - \log \big(\frac{1 + \theta_j}{1 + \lambda_i}\big) - 1\Big).
\end{equation}
We note that the $n\times n$ matrix $T$ with entries
\begin{equation}\label{eq:bistochastic}
T_{ij} = (u_i\Hermitian v_j)^2, \quad i, j = 1,\ldots,n,
\end{equation}
is a bistochastic matrix. By assumption, 
$S \in \Snpp$, so the eigenvalues of $I+\tilde E$ are strictly positive 
which, in turn, confides  the eigenvalues of $\tilde E$ to the interval
\begin{equation}\label{eq:eigenvalue_interval}
\theta_j \in (-1, \infty),\quad j=1, \ldots, n.
\end{equation}

By setting $U=V$ as an ansatz in \cref{eq:G_and_W}, we reduce the problem in
\cref{eq:Bregman_truncation:problem} to finding $r$ non-zero entries of
the diagonal matrix $\Lambda$ that minimise the objective. Indeed, for this ansatz,
$T$ is the identity matrix and \cref{eq:Divergence:general} becomes
\begin{equation}\label{eq:Divergence:U=V}
\BregmanLogDet(I + \tilde E, I + W) = \sum_{i=1}^n \Big( \frac{1 + \theta_i}{1 + \lambda_i} - \log \big(\frac{1 + \theta_i}{1 + \lambda_i}\big) - 1\Big).
\end{equation}
We have the constraint that only $r<n$ of the elements of $\lambda$ can
be non-zero. If $\lambda_i = 0$
\[
\Big[\frac{1 + \theta_i}{1 + \lambda_i} - \log \big(\frac{1 + \theta_i}{1 + \lambda_i}\big) - 1\Big]_{\lambda_i = 0} = \theta_i - \log (1 + \theta_i).
\]
We can therefore pose \eqref{eq:Bregman_truncation:problem} as a 
combinatorial problem:
\begin{subequations}\label{eq:Divergence:U=V:combinatorical}
\begin{align}
\min_{d\in \{0, 1\}^n} \quad & \sum_{i=1}^n \Big(\theta_i - \log (1 + \theta_i)\Big) (1 - d_i)\label{eq:Divergence:U=V:combinatorical:objective}\\
\subto\quad & \sum_{i=1}^n d_i \leq r.
\end{align}
\end{subequations}
Problem \eqref{eq:Divergence:U=V:combinatorical} 
is equivalent to finding the $r$ largest values of
\[
\theta_i - \log\big(1 + \theta_i\big),\quad i=1,\ldots,n.
\]
We recover the eigenvalues of a minimiser of \cref{eq:Divergence:U=V} from
a solution to \cref{eq:Divergence:U=V:combinatorical}:
\begin{equation}\label{eq:U=V:lambda_optimal}
\lambda_i^\star = \begin{cases}
\theta_i & \text{if}\, d_i = 1,\\
0 & \text{otherwise}.
\end{cases}
\end{equation}
\Cref{eq:U=V:lambda_optimal} tells us that the
non-zero eigenvalues of a minimiser of \cref{eq:Divergence:U=V} are exactly 
those of $\tilde E$. Later on, we will show that this ansatz is a minimiser
of the divergence. The key observation here is that the function
\begin{equation}\label{eq:bregman_curve}
x \mapsto \gamma(x) = x - \log\big(1+x\big),
\end{equation}

\begin{figure}
    \centering
    \includegraphics[scale=0.4]{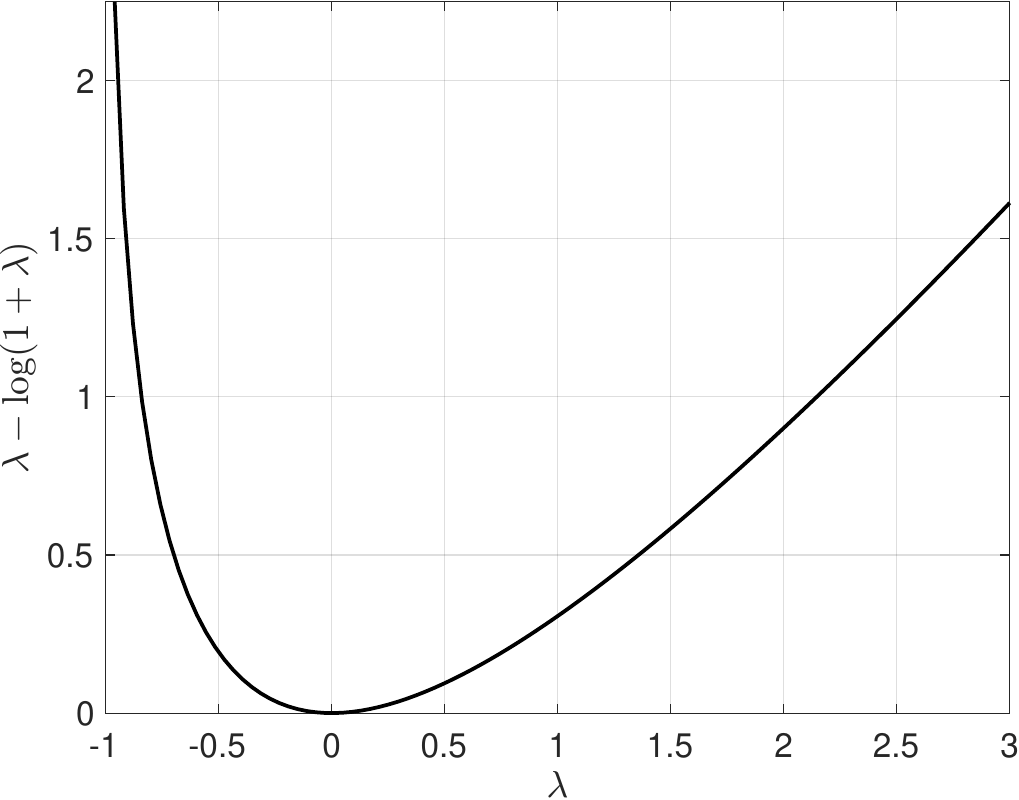}
    \caption{Bregman curve $\gamma$ in \cref{eq:bregman_curve}.}
    \label{fig:bregman_curve}
\end{figure}

is not an even function, as can be seen in \cref{fig:bregman_curve},
indeed $0.1931 = \gamma(- 0.5) > \gamma(0.5) = 0.0945$. In contrast, a TSVD
produces a rank $r$ approximation $\tsvd{\tilde E}{r}$ of the Hermitian matrix
$\tilde E$ by selecting the directions according to $x\mapsto |x|$, which by 
definition is an even function. A TSVD appraises both eigenvalues $\pm 0.5$
equally, whereas the Bregman log determinant divergence distinguishes between 
their contribution based on their sign. The following definition summarises 
this mathematically.

\begin{definition}[Bregman log determinant truncation]\label{def:Bregman_truncation}
Given $I + X \in \Snpp$ and the eigendecomposition $X=Z M Z\Hermitian\in\Sn$,
$M = \diag (\mu_1, \ldots, \mu_n)$, construct the index set
$\BregmanIndexSet{X}$ of cardinality $\# \BregmanIndexSet{X} = r<n$ by 
sorting the sequence
$\{\gamma(\mu_i)\}_{i=1}^n$ (the function $\gamma$ is defined in
\cref{eq:bregman_curve} shown in \cref{fig:bregman_curve}) and selecting
the indices corresponding to the largest values i.e.
\begin{equation}\label{eq:BregmanIndexSet}
\BregmanIndexSet{X} = \arg\max_{\substack{\mathcal{I} \subset \{1, \ldots, n\}\\ \#\mathcal{I} = r}} \sum_{j \in \mathcal{I}} \gamma(\mu_j).
\end{equation}
We define a BLD truncation of $X$ to order $r$ by
\begin{equation}\label{eq:Bregman_truncation}
\BregTrunc{X} = Z^\star M^\star (Z^\star)\Hermitian,
\end{equation}
where the columns $Z^\star \in \Cnr$ are constructed from $Z$ by deleting column
$j$ if $j\not\in\BregmanIndexSet{X}$. Similarly, the diagonal matrix $M^\star
\in \Crr$ is constructed from $M$ by deleting row $j$ and column $j$ if 
$j\not\in\BregmanIndexSet{X}$. 
\end{definition}

Before presenting the main result, we first look at an example using a diagonal matrix to develop some intuition.
\newcommand{\svdselect}[1]{\overbracket[1pt][3pt]{\strut #1 }^{\textbf{\color{cyan}{\emph{TSVD}}}}}
\newcommand{\bregselect}[1]{\underbracket[1pt][3pt]{\strut #1 }_{\textbf{\color{CarnationPink}{\emph{BLD}}}}}
\definecolor{darkgreen}{rgb}{0.21, 0.62, 0.22}
\newcommand{\svdselectgreen}[1]{\overbracket[1pt][3pt]{\strut #1 }^{\textbf{\color{darkgreen}{\emph{TSVD}}}}}
\newcommand{\bregselectgreen}[1]{\underbracket[1pt][3pt]{\strut #1 }_{\textbf{\color{darkgreen}{\emph{BLD}}}}}
\newcommand{\select}[1]{\svdselectgreen{\bregselectgreen{ #1 }}}

\begin{example}[Diagonal matrix]\label{ex:breg_vs_svd:diagonal}
Let us consider a concrete example of this approximation where $n=8$, $r=4$ and
$I + \tilde E$ is a diagonal matrix, where
\begin{align*}
\tilde E = \diag(\bregselect{-0.46}, \bregselect{-0.4}, -0.3, 0.18, \svdselect{0.5}, \svdselect{0.54}, \select{0.72}, \select{1.0}).
\end{align*}
\begin{figure}
    \centering
    \includegraphics[scale=0.45]{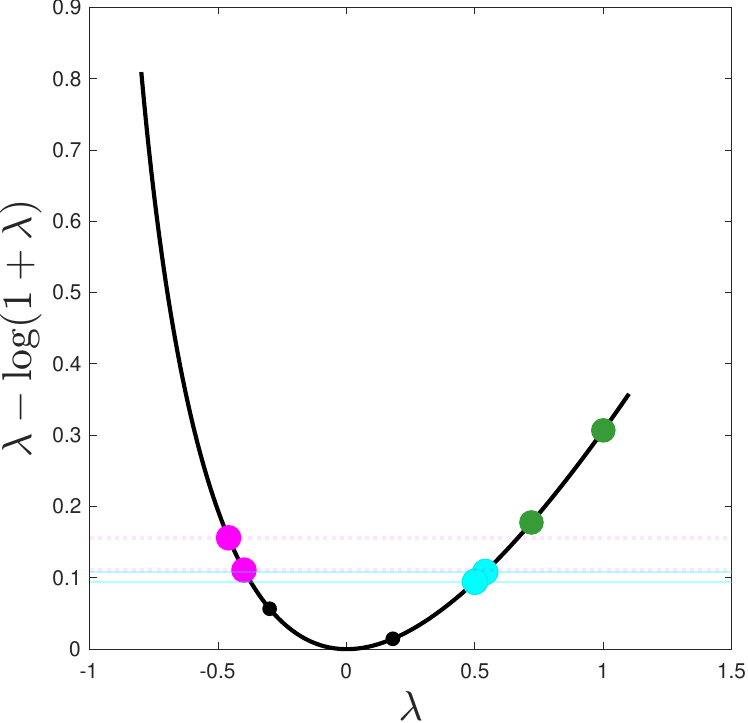}
    \includegraphics[scale=0.45]{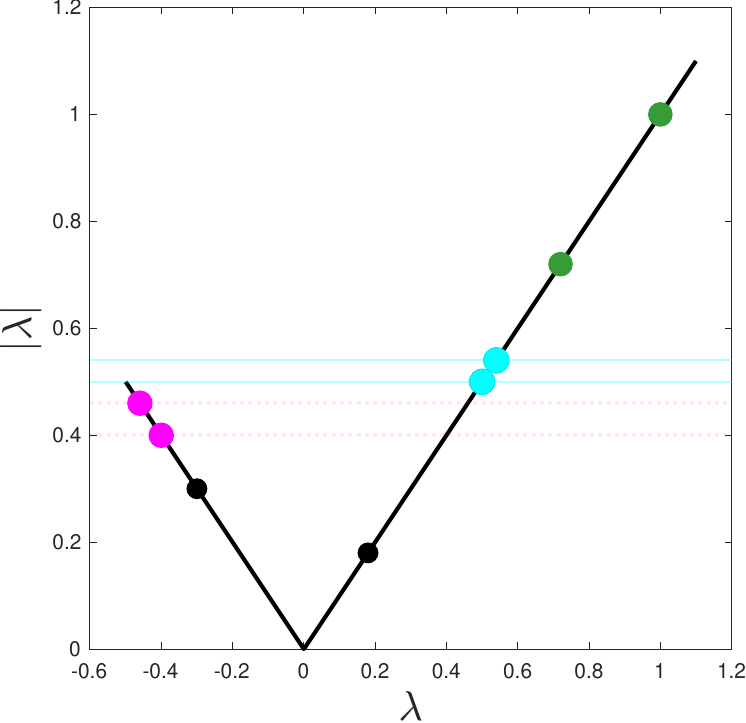}
    \caption{This depicts the situation in \cref{ex:breg_vs_svd:diagonal} where we 
    compare a TSVD and BLD truncations of a diagonal matrix $I + \tilde E$.
    The continuous curve traced on the left is \cref{eq:bregman_curve}.}
    \label{fig:Eigenvalues:U=V}
\end{figure}
The brackets indicate which eigenvalues are selected by the rank $r$ TSVD and
the BLD truncation to the same order. \Cref{fig:Eigenvalues:U=V} shows how the two
truncations differ. In the left figure, we show the image of these values under $\lambda\mapsto
\gamma(\lambda)$, whereas in the right we show them under that of $\lambda\mapsto |\lambda|$. 
It is clear that selecting the two negative eigenvalues leads to a smaller objective
value for $\gamma$ than those selected by a TSVD. The BLD truncation 
selects dominant directions according to the curve defined in \eqref{eq:bregman_curve}.
Furthermore,
\[
0.2381 = \BregmanLogDet(I + \tilde E, I + \BregTrunc{\tilde E}) < \BregmanLogDet(I + \tilde E, I + \tsvd{\tilde E}{r}) = 0.4764.
\]
\end{example}

We now prove the main result of this section.

\begin{theorem}\label{thm:Bregman_truncation:minimiser}
$\BregTrunc{\tilde E}$ is a minimiser of \cref{eq:Bregman_truncation:problem}.
\end{theorem}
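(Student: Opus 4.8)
The plan is to exploit the additive structure of the divergence in \cref{eq:Divergence:general} and separate the minimisation over the eigenvectors of $W$ from the minimisation over its eigenvalues. Writing $g(\theta,\lambda) = \frac{1+\theta}{1+\lambda} - \log\big(\frac{1+\theta}{1+\lambda}\big) - 1$, the key scalar fact is that $g \ge 0$ with equality if and only if $\theta=\lambda$, since $g$ is just the scalar Bregman divergence of $x\mapsto-\log x$ evaluated at $x=\frac{1+\theta}{1+\lambda}$. With $T_{ij}=(u_i\Hermitian v_j)^2$ as in \cref{eq:bistochastic}, the objective reads $\sum_{i,j} T_{ij}\, g(\theta_j,\lambda_i) = \langle T, G\rangle$, a \emph{linear} functional of $T$ with coefficient matrix $G_{ij}=g(\theta_j,\lambda_i)$, where $T$ is doubly stochastic.

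First I would fix the spectrum $\lambda=(\lambda_1,\ldots,\lambda_n)$ of $W$ and minimise over the unitary $U$, i.e. over the orthostochastic matrices $T$. Since every orthostochastic matrix is doubly stochastic, relaxing the constraint on $T$ to the full Birkhoff polytope gives a lower bound; by the Birkhoff--von Neumann theorem a linear functional attains its minimum over this polytope at a vertex, that is, at a permutation matrix. Because permutation matrices are themselves orthostochastic --- realised by taking $U=V\Pi$ so that $T=\Pi$ --- this lower bound is in fact attained, and the inner minimisation collapses to $\min_\pi \sum_i g(\theta_{\pi(i)},\lambda_i)$ over permutations $\pi$. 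This eigenvector-alignment step is the crux of the argument and the part needing the most care: the whole reduction hinges on the doubly stochastic relaxation being tight, which is exactly what the permutation realisation guarantees.

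Next, for a fixed permutation $\pi$ I would minimise over the eigenvalues subject to $\rank(W)\le r$, i.e. at most $r$ of the $\lambda_i$ nonzero. For each index allowed to be nonzero, $g\ge 0$ together with $g(\theta_{\pi(i)},\theta_{\pi(i)})=0$ forces the optimal choice $\lambda_i=\theta_{\pi(i)}$, which is feasible because $\theta_{\pi(i)}>-1$ by \cref{eq:eigenvalue_interval} and hence keeps $I+W\in\Snpp$; each index forced to zero contributes $g(\theta_{\pi(i)},0)=\gamma(\theta_{\pi(i)})$, with $\gamma$ as in \cref{eq:bregman_curve}. Since $\pi$ is a bijection, choosing which $r$ coordinates are active is equivalent to choosing an index set $\mathcal I$ of matched eigenvalues, so the optimal value for fixed $\pi$ reduces to $\sum_{j\notin\mathcal I}\gamma(\theta_j)$.

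Finally, minimising $\sum_{j\notin\mathcal I}\gamma(\theta_j)$ over $\#\mathcal I=r$ is the same as maximising $\sum_{j\in\mathcal I}\gamma(\theta_j)$, whose maximiser is precisely the index set $\BregmanIndexSet{\tilde E}$ of \cref{eq:BregmanIndexSet}. Reassembling the corresponding eigenpairs $(\theta_j,v_j)$ for $j\in\BregmanIndexSet{\tilde E}$ yields a minimiser $W=\sum_{j\in\BregmanIndexSet{\tilde E}}\theta_j\, v_j v_j\Hermitian=\BregTrunc{\tilde E}$, exactly as defined in \cref{eq:Bregman_truncation}. Because the chain of (in)equalities exhibits this value as a lower bound over \emph{all} feasible $W$ and shows it to be attained by $\BregTrunc{\tilde E}$, the latter is a global minimiser of \cref{eq:Bregman_truncation:problem}.
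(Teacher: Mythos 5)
Your proposal is correct, and it reaches the paper's lower bound by a genuinely different route. The paper's proof works directly with the trace and determinant terms: it invokes the trace inequality of Bushell--Trustrum (the cited \texttt{bushell1990trace}) to bound $\trace{(I+W)\inv(I+\tilde E)} \geq \sum_i \frac{1+\theta_i}{1+\lambda_i}$ with eigenvalues paired in sorted order, uses the exact factorisation of $\logdet{(I+W)\inv(I+\tilde E)}$, and then argues that the resulting sum of scalar terms is bounded below by $\sum_{i\notin\BregmanIndexSet{\tilde E}}\gamma(\theta_i)$, a bound attained by $\BregTrunc{\tilde E}$. You instead start from the expansion \cref{eq:Divergence:general}, view the objective as a linear functional $\langle T, G\rangle$ of the doubly stochastic matrix $T$ in \cref{eq:bistochastic}, relax to the Birkhoff polytope, and use Birkhoff--von Neumann to collapse the eigenvector alignment to a permutation; tightness follows because permutation matrices are realisable by $U=V\Pi$. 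What your route buys: it is self-contained (no external matrix inequality is imported --- your Birkhoff step in effect re-proves the relevant instance of the trace inequality), it optimises the trace and log-determinant terms jointly rather than bounding the trace in isolation, and it makes fully explicit the step the paper labels ``it is easy to see,'' namely that an arbitrary placement of the $r$ nonzero $\lambda_i$ can do no better than the index-set selection defining $\BregmanIndexSet{\tilde E}$ (the paper's sorted-pairing bound implicitly requires noting that the nonzero $\lambda_i$ occupy the extremes of the sorted order, and its displayed index set contains a typo). What the paper's route buys is brevity, given the citation. Two minor points of care in your write-up: in the complex Hermitian setting the entries of $T$ should be read as $|u_i\Hermitian v_j|^2$ and $T$ is unistochastic rather than orthostochastic, but this changes nothing since unistochastic matrices are doubly stochastic and permutations remain realisable; and your feasibility check $\theta_j>-1$ (from \cref{eq:eigenvalue_interval}) ensuring $I+\BregTrunc{\tilde E}\in\Snpp$ is indeed needed and correctly placed.
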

\begin{proof}
We derive a lower bound for $W\mapsto \BregmanLogDet(I + \tilde E, I + W)$.
Recall from \cref{eq:G_and_W} that we have
\begin{align*}
& I + W = I + U \Lambda U^*,\\
& I + \tilde E = I + V \Theta V^*.
\end{align*}
Note that only $r$ values of $\lambda$ are non-zero. Using the bound from 
\cite{bushell1990trace} we have
\begin{align*}
\trace{(I + W)\inv (I + \tilde E)} \geq \sum_{i=1}^n \frac{1+\theta_i}{1+\lambda_i}.
\end{align*}
Note that
\begin{align*}
- \logdet{(I + W)\inv (I + \tilde E)} = - \sum_{i=1}^n \log\left(\frac{1+\theta_i}{1+\lambda_i}\right).
\end{align*}
It is easy to see that
\begin{align*}
\BregmanLogDet(I + \tilde E, I + W,) & \geq \sum_{i \not\in \{1,\ldots,n\}\setminus\BregmanIndexSet{\tilde E}} \Big(
\frac{1+\theta_i}{1+\lambda_i} - \log\left(\frac{1+\theta_i}{1+\lambda_i}\right)
- 1\Big)\\
& = \sum_{i \not\in \{1,\ldots,n\}\setminus\BregmanIndexSet{\tilde E}} \Big(
\theta_i - \log\left(1+\theta_i\right) \Big),
\end{align*}
and this lower bound is attained by the choice $W=\BregTrunc{\tilde E}$ in \cref{eq:P_choice}.
\end{proof}

\subsection{On the asymmetry of the Bregman divergence}\label{sec:asymmetry}

The Bregman log determinant divergence is not symmetric
with respect to its arguments characterised by the identity
\[
\BregmanLogDet(X, Y) = \text{trace}(XY^{-1} + YX^{-1}) - \BregmanLogDet(Y,X) - 2n.
\]
We now address this asymmetry in the context of deriving low-rank approximations.\\

In a previous work \cite{bock2023preconditioner}, the authors studied an 
optimisation problem similar to \cref{eq:Bregman_truncation:problem}, but
where the arguments of the Bregman divergence have been swapped:
\begin{subequations}\label{eq:Bregman_truncation:problem:inv}
\begin{align}
\min_{W\in\Sn} \quad & \BregmanLogDet(I + W, I + \tilde E)\\
\subto\quad
& I + W \in \Snpp\\
& \rank W \leq r.
\end{align}
\end{subequations}
It can be shown that this leads to a truncation of the matrix $\tilde E$ similar
to that given in \cref{def:Bregman_truncation}, but where $\gamma$ is replaced
by

\begin{figure}
    \centering
    \includegraphics[scale=0.4]{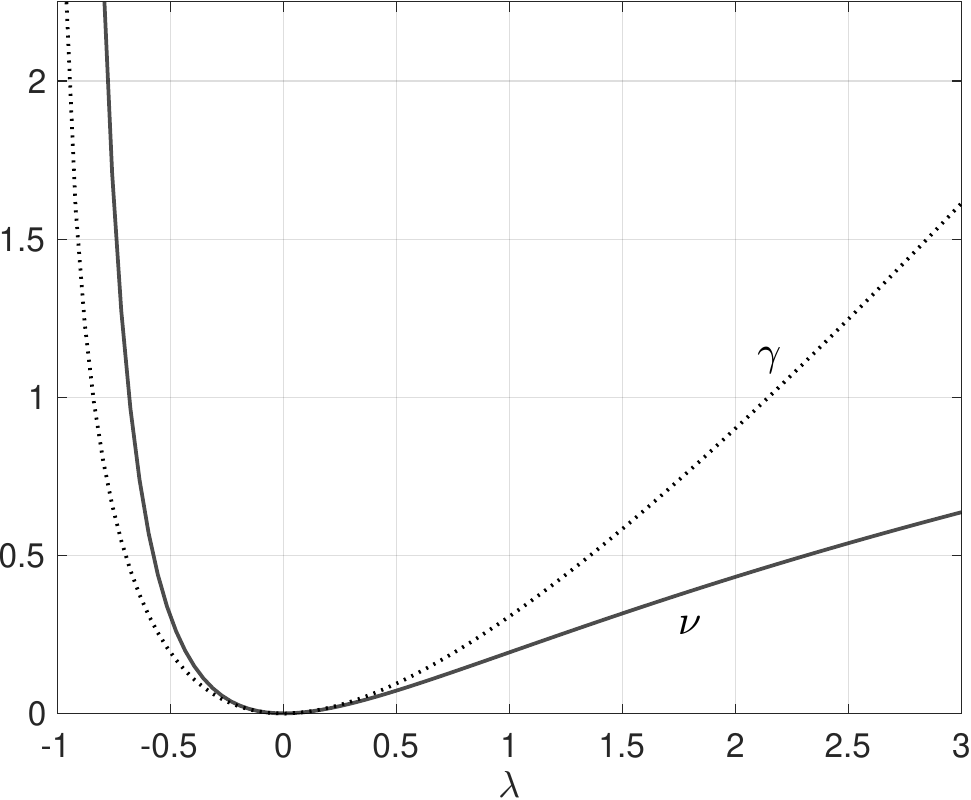}
    \caption{Reverse Bregman curve $\nu$ defined in \cref{eq:bregman_curve:inv}.
    The dashed line is the Bregman curve $\gamma$ from \cref{eq:bregman_curve}.}
    \label{fig:bregman_curve:inv}
\end{figure}

\begin{equation}\label{eq:bregman_curve:inv}
\nu(x) = \frac{1}{1 + x} - \log\Big(\frac{1}{1 + x}\Big) - 1,
\end{equation}
which is drawn in \cref{fig:bregman_curve:inv}.
\Cref{eq:bregman_curve:inv} leads to the following definition.
\begin{definition}[Reverse Bregman log determinant truncation]\label{def:Bregman_truncation:inverse}
Given $I + X \in \Snpp$ and the eigendecomposition $X=Z M Z\Hermitian\in\Sn$,
$M = \diag (\mu_1, \ldots, \mu_n)$, construct the index set
$\InverseBregmanIndexSet{X}$ of cardinality $\# \InverseBregmanIndexSet{X} = r<n$ by 
sorting the sequence
$\{\nu(\mu_i)\}_{i=1}^n$ and selecting the indices corresponding to the
largest values i.e.
\begin{equation}\label{eq:BregmanIndexSet:inverse}
\InverseBregmanIndexSet{X} = \arg\max_{\substack{\mathcal{I} \subset \{1, \ldots, n\}\\ \#\mathcal{I} = r}} \sum_{j \in \mathcal{I}} \nu(\mu_j).
\end{equation}
We define the \emph{reverse Bregman log determinant} (RBLD) truncation of $X$ to
order $r$ by
\begin{equation}\label{eq:Bregman_truncation:inverse}
\Sn\ni\BregTruncRev{X} = Z^\star M^\star (Z^\star)\Hermitian,
\end{equation}
where the columns $Z^\star \in \Cnr$ are constructed from $Z$ by deleting column
$j$ if $j\not\in\InverseBregmanIndexSet{X}$. Similarly, the diagonal matrix $M^\star
\in \Crr$ is constructed from $M$ by deleting row $j$ and column $j$ if 
$j\not\in\InverseBregmanIndexSet{X}$. 
\end{definition}

Finally, we present an important case of when the truncations coincide.

\begin{proposition}\label{cor:Br=Bbreg}
When $X\in\Snp$ with distinct non-zero eigenvalues, $\tsvd{X}{r} = \BregTrunc{X} = \BregTruncRev{X}$.
\end{proposition}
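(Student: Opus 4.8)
The plan is to reduce all three truncations to a single \emph{ordering} of the eigenvalues of $X$ and then to verify that this ordering is shared by the three selection criteria. Since $X\in\Snp$, the eigendecomposition $X = ZMZ\Hermitian$ has $M = \diag(\mu_1,\ldots,\mu_n)$ with every $\mu_i \geq 0$, and the singular values of $X$ coincide with its eigenvalues. Hence the TSVD $\tsvd{X}{r}$ retains precisely the eigenpairs associated with the $r$ largest $\mu_i$, i.e. it selects indices by the criterion $x\mapsto x$ (equivalently $x\mapsto|x|$ on the nonnegative reals). The distinctness of the nonzero eigenvalues guarantees there are no ties, so this retained index set is unique.

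The heart of the argument is then two elementary monotonicity computations. Differentiating the Bregman curve gives $\gamma'(x) = 1 - \tfrac{1}{1+x} = \tfrac{x}{1+x}$, which is strictly positive for $x>0$; differentiating the reverse Bregman curve gives $\nu'(x) = -\tfrac{1}{(1+x)^2} + \tfrac{1}{1+x} = \tfrac{x}{(1+x)^2}$, again strictly positive for $x>0$. Thus both $\gamma$ and $\nu$ are strictly increasing on $[0,\infty)$, exactly the range in which the eigenvalues $\mu_i$ lie. Consequently, for distinct nonnegative eigenvalues, sorting $\{\mu_i\}$, $\{\gamma(\mu_i)\}$ and $\{\nu(\mu_i)\}$ all produce the same order.

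I would then conclude by comparing the index sets. By \cref{def:Bregman_truncation,def:Bregman_truncation:inverse}, $\BregmanIndexSet{X}$ and $\InverseBregmanIndexSet{X}$ select the indices of the $r$ largest values of $\gamma(\mu_i)$ and $\nu(\mu_i)$ respectively; by the monotonicity just established these are both equal to the set of indices of the $r$ largest $\mu_i$, which is the set retained by the TSVD. Since all three truncations are formed by keeping the same columns of $Z$ and the same diagonal entries of $M$ (cf. \cref{eq:Bregman_truncation,eq:Bregman_truncation:inverse}), the three resulting matrices are identical, giving $\tsvd{X}{r} = \BregTrunc{X} = \BregTruncRev{X}$.

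The computations are routine, so the only point requiring care is the precise role of the hypotheses. Positive semi-definiteness is what confines the $\mu_i$ to $[0,\infty)$, where $\gamma$ and $\nu$ agree in monotonicity with $|\cdot|$; note they \emph{disagree} on $(-1,0)$, which is exactly the indefinite regime exploited in \cref{ex:breg_vs_svd:diagonal} and the source of the distinction between the truncations in general. Meanwhile, distinctness of the nonzero eigenvalues rules out ties and makes each retained index set unique and well-defined. I therefore do not anticipate any substantive obstacle beyond stating these two hypotheses cleanly.
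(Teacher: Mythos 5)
Your proposal is correct and follows essentially the same route as the paper's own proof: both arguments rest on the monotonicity of $\gamma$ and $\nu$ over the range where the eigenvalues of a positive semi-definite matrix lie, which forces $\BregmanIndexSet{X}$ and $\InverseBregmanIndexSet{X}$ to coincide with the index set retained by the TSVD. Your version is slightly more explicit (computing $\gamma'(x)=x/(1+x)$ and $\nu'(x)=x/(1+x)^2$ and noting the role of distinctness in excluding ties), whereas the paper simply asserts the monotonicity, but the underlying idea is identical.
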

\begin{proof}
The eigenvalues of $I + \tilde X$ are bounded from below by $1$ when $X\in\Snp$ 
and since $\gamma$ (resp. $\nu$) is an increasing function on $[1, \infty)$
the index set $\BregmanIndexSet{X}$ (resp. $\InverseBregmanIndexSet{X}$)
constructed in \cref{eq:BregmanIndexSet} (resp. \cref{eq:BregmanIndexSet:inverse})
selects the same $r$ principal directions as a TSVD, so $\tsvd{X}{r} = \BregTrunc{X}$.
$\nu$ is an increasing function on $[1, \infty)$, so $\BregTrunc{X} = \BregTruncRev{X}$
since $X\in\Snp$.
\end{proof}
\section{A preconditioner with Bregman log determinant  truncation}\label{sec:preconditioner}

We propose the preconditioner
\begin{equation}\label{eq:Bregman_preconditioner}
\PrecondBreg{S} = Q(I + \BregTrunc{\tilde E})Q\Hermitian
\end{equation}
based on a BLD truncation of $\tilde E$. As we show next, this preconditioner 
is a solution to an optimisation problem.
\begin{theorem}\label{thm:Bregman_truncation:minimiser:preconditioner}
$\PrecondBreg{S}$ is a minimiser of
\begin{subequations}\label{eq:btSS}
\begin{align}
\minimise_{P\in\Snpp}\quad & \BregmanLogDet(S, P)\\
\subto\quad
& P = Q(I + W)Q\Hermitian,\label{eq:P_choice}\\
& W \in \Sn,\\
& \rank(W) = r.
\end{align}
\end{subequations}
\end{theorem}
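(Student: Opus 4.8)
The plan is to avoid any new computation and instead reduce \eqref{eq:btSS} verbatim to the truncation problem \eqref{eq:Bregman_truncation:problem}, whose solution is already pinned down by \cref{thm:Bregman_truncation:minimiser}, and then transport that solution back through the congruence defining $\PrecondBreg{S}$. The engine is the congruence invariance \eqref{eq:Bregman:invariance} of the divergence, already packaged for this setting in \eqref{eq:I+W}.

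Concretely, I would first note that $Q$ is invertible (this is implicit in forming $\tilde E = Q\inv E Q\invHermitian$ and the preconditioned system \eqref{eq:Sxb:precond}), so $W \mapsto Q(I+W)Q\Hermitian$ is a bijection and, by congruence, $Q(I+W)Q\Hermitian \in \Snpp$ if and only if $I + W \in \Snpp$. Hence the feasible set of \eqref{eq:btSS} is in bijective correspondence with the feasible set of \eqref{eq:Bregman_truncation:problem}. Next, \eqref{eq:I+W} gives $\BregmanLogDet(S, Q(I+W)Q\Hermitian) = \BregmanLogDet(I+\tilde E, I+W)$ for every admissible $W$, so the two objectives agree pointwise along this correspondence. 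Minimising \eqref{eq:btSS} over $P$ is therefore identical to minimising \eqref{eq:Bregman_truncation:problem} over $W$, and \cref{thm:Bregman_truncation:minimiser} identifies $W = \BregTrunc{\tilde E}$ as a minimiser of the latter. Pushing this through the congruence yields $P = Q(I+\BregTrunc{\tilde E})Q\Hermitian = \PrecondBreg{S}$ as a minimiser of \eqref{eq:btSS}, which is the claim.

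The one point that needs care --- the main (and essentially only) obstacle --- is the mismatch between the rank constraints: \eqref{eq:Bregman_truncation:problem} imposes $\rank W \leq r$ whereas \eqref{eq:btSS} imposes $\rank W = r$. To close this gap I would use that $\gamma(x) = x - \log(1+x) \ge 0$ with equality only at $x = 0$: the index set $\BregmanIndexSet{\tilde E}$ of \cref{def:Bregman_truncation} selects the $r$ largest values $\gamma(\theta_i)$, and provided $\tilde E$ has at least $r$ nonzero eigenvalues these indices carry nonzero $\theta_i$, so that $\rank \BregTrunc{\tilde E} = r$ and the minimiser of the relaxed problem is already feasible for \eqref{eq:btSS}; the two optimal values then coincide. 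I would remark that if $\tilde E$ has fewer than $r$ nonzero eigenvalues, then forcing $\rank W = r$ necessarily activates a direction with $\theta_i = 0$, which strictly increases the objective, so in that degenerate case the $\leq r$ formulation is the meaningful one and $\PrecondBreg{S}$ remains the natural minimiser.
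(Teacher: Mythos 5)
Your proof is correct and follows essentially the same route as the paper's: congruence invariance \eqref{eq:Bregman:invariance} reduces \eqref{eq:btSS} pointwise to the truncation problem \eqref{eq:Bregman_truncation:problem}, and \cref{thm:Bregman_truncation:minimiser} then identifies $W = \BregTrunc{\tilde E}$, hence $P = \PrecondBreg{S}$, as a minimiser. Your extra paragraph on the mismatch between $\rank(W) \leq r$ in \eqref{eq:Bregman_truncation:problem} and $\rank(W) = r$ in \eqref{eq:btSS}, resolved via $\gamma(x) \geq 0$ with equality only at $x=0$, addresses a point the paper's two-line proof passes over silently, and is a sound refinement rather than a different approach.
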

\begin{proof}
Since the $\BregmanLogDet$ is invariant to congruence transformations, 
\[
\BregmanLogDet(S, P) = \BregmanLogDet(I + \tilde E, I + W),
\]
where $\tilde E = Q\inv E Q\invHermitian$. The result follows from 
\cref{thm:Bregman_truncation:minimiser}.
\end{proof}

\begin{remark}\label{remark:exact}
Obviously, the (reverse) Bregman truncation is exact if $\rank(\tilde E)
\leq r$. This highlights two interesting aspects of designing a preconditioner 
for $S$ within the framework presented here. Suppose we choose $Q$ as an
incomplete Cholesky factor of $S$. A fill-reducing algorithm can be used 
in such a procedure, in which case the factors are typically given up to
a permutation matrix $\mathcal{P}$
\[
Q = \mathcal{P}\inv C,
\]
which balances numerical accuracy against sparsity of $Q$. The permutation
matrix $\mathcal{P}$ will have an effect on $\tilde E$ (and its truncations).
Indeed, rank is not well-behaved under addition in the sense that for two 
Hermitian full rank matrices:
\[
0 \leq \rank(\tilde E) = \rank(S - QQ\Hermitian) \leq n.
\]
When applying the proposed framework, it may therefore also be of interest
to consider the interplay between $\mathcal{P}$ and the rank of $\tilde E$.
Roughly speaking, we therefore have three objectives: numerical
stability of $Q$, its structure or size (e.g. sparsity) and a reduction in
the rank of the term $S - QQ\Hermitian$.
\end{remark}

We define a preconditioner analogous to \cref{eq:Bregman_preconditioner}
but using the RBLD truncation in \cref{def:Bregman_truncation:inverse}.
\begin{equation}\label{eq:Bregman_preconditioner:reverse}
\PrecondBregRev{S} = Q(I + \BregTruncRev{\tilde E})Q\Hermitian.
\end{equation}

\begin{theorem}\label{thm:Bregman_truncation:minimiser:preconditioner:reverse}
$\PrecondBregRev{S}$ is a minimiser of
\begin{subequations}\label{eq:btSS:reverse}
\begin{align}
\minimise_{P\in\Snpp}\quad & \BregmanLogDet(P, S)\\
\subto\quad
& P = Q(I + W)Q\Hermitian,\\
& W \in \Sn,\\
& \rank(W) = r.
\end{align}
\end{subequations}

\end{theorem}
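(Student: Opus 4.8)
The plan is to mirror the proof of \cref{thm:Bregman_truncation:minimiser:preconditioner}, interchanging the two arguments of $\BregmanLogDet$ throughout. First I would invoke the congruence invariance \cref{eq:Bregman:invariance}: writing $P = Q(I+W)Q\Hermitian$ and $S = Q(I+\tilde E)Q\Hermitian$ as in \cref{eq:S_with_Error}, it gives $\BregmanLogDet(P, S) = \BregmanLogDet(I + W, I + \tilde E)$, so that \cref{eq:btSS:reverse} is exactly the reverse truncation problem \cref{eq:Bregman_truncation:problem:inv} for the matrix $\tilde E$. It therefore suffices to prove the reverse analogue of \cref{thm:Bregman_truncation:minimiser}: that $\BregTruncRev{\tilde E}$ minimises $\BregmanLogDet(I + W, I + \tilde E)$ subject to $I+W\in\Snpp$ and $\rank W \leq r$.

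To establish this I would repeat the argument of \cref{thm:Bregman_truncation:minimiser} with the arguments swapped. Expanding in the eigendecompositions of \cref{eq:G_and_W} yields, as in \cref{eq:Divergence:general} but with $\theta_j$ and $\lambda_i$ interchanged, $\BregmanLogDet(I + W, I + \tilde E) = \sum_{i=1}^n\sum_{j=1}^n (u_i\Hermitian v_j)^2\big(\frac{1+\lambda_i}{1+\theta_j} - \log\frac{1+\lambda_i}{1+\theta_j} - 1\big)$. I would then lower-bound the trace term using \cite{bushell1990trace}, now in the reciprocal form $\trace{(I+W)(I+\tilde E)\inv} \geq \sum_{i=1}^n \frac{1+\lambda_i}{1+\theta_i}$, while the determinant term $-\logdet{(I+W)(I+\tilde E)\inv} = -\sum_{i=1}^n \log\frac{1+\lambda_i}{1+\theta_i}$ is an exact identity independent of the eigenvector alignment. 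Combining these gives the lower bound $\BregmanLogDet(I + W, I + \tilde E) \geq \sum_{i=1}^n\big(\frac{1+\lambda_i}{1+\theta_i} - \log\frac{1+\lambda_i}{1+\theta_i} - 1\big)$.

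Each summand is nonnegative and vanishes precisely when $\lambda_i = \theta_i$. Since at most $r$ of the $\lambda_i$ may be nonzero, on the $n-r$ unselected indices we must set $\lambda_i = 0$, where the summand evaluates to $\frac{1}{1+\theta_i} - \log\frac{1}{1+\theta_i} - 1 = \nu(\theta_i)$, with $\nu$ as in \cref{eq:bregman_curve:inv}. Thus the lower bound is minimised by reserving the $r$ selected indices for the $r$ largest values of $\nu(\theta_i)$, which is exactly the index set $\InverseBregmanIndexSet{\tilde E}$ of \cref{def:Bregman_truncation:inverse}; and this minimum is attained by the aligned ($U=V$) choice $W = \BregTruncRev{\tilde E}$. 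Feeding this back through the congruence reduction shows that $\PrecondBregRev{S} = Q(I + \BregTruncRev{\tilde E})Q\Hermitian$ solves \cref{eq:btSS:reverse}.

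I expect the main obstacle to be the same subtle point that underlies \cref{thm:Bregman_truncation:minimiser}: justifying that the trace inequality of \cite{bushell1990trace} delivers the bound with the eigenvalues in the \emph{aligned} pairing $\sum_i \tfrac{1+\lambda_i}{1+\theta_i}$, which requires a consistent (co-)sorting of the two spectra, and consequently that the diagonal $U=V$ ansatz is a global minimiser rather than merely a critical configuration. Care is also needed to confirm that the relevant selection curve is $\nu$ rather than $\gamma$ or $|\cdot|$; this follows from evaluating the summand at $\lambda_i = 0$, and it is precisely the asymmetry of $\BregmanLogDet$ that replaces $\gamma$ by $\nu$ and hence changes which directions the truncation retains.
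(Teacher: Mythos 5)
Your proposal is correct and follows the paper's intended route exactly: congruence invariance reduces \cref{eq:btSS:reverse} to the reverse truncation problem \cref{eq:Bregman_truncation:problem:inv}, whose minimiser is $\BregTruncRev{\tilde E}$, so $\PrecondBregRev{S}$ solves \cref{eq:btSS:reverse}. You in fact supply more detail than the paper, whose proof is a one-line deferral to \cref{thm:Bregman_truncation:minimiser:preconditioner}: your swapped-argument version of the trace-inequality and log-determinant computation is precisely the omitted verification (asserted but not proven in \cref{sec:asymmetry}) that the selection curve becomes $\nu$ instead of $\gamma$, and it is sound, including your closing caveat about the sorted pairing in the bound from \cite{bushell1990trace}.
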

\begin{proof}
The proof is similar to that of \cref{thm:Bregman_truncation:minimiser:preconditioner}.
\end{proof}

We show empirically how these preconditioners perform in \cref{sec:numerical_results:ichol}
for modestly-sized matrices and develop some intuition for when they may differ from a
preconditioner based on a TSVD as in \cref{eq:scaled}. 
The preconditioners in \cref{eq:Bregman_preconditioner,eq:Bregman_preconditioner:reverse}
are expensive to construct in general, in particular for large matrices.
In \cref{sec:Bregman_directions}, we describe approximate truncations and analyse the
resulting preconditioners in \cref{sec:preconditioner:approx} (computational cost, storage,
application, etc.).
\section{Approximating the Bregman log determinant truncation}\label{sec:Bregman_directions}

The aim in this section is to present a heuristic to approximate
the BLD and RBLD truncations from \cref{sec:approximation_problem}.\\

We can regard $\BregTrunc{\tilde E}$ as consisting of two terms stemming
from the positive and negative semi-definite parts of $\tilde E$ that they
approximate. This can be viewed as splitting the "budget" $r$ into two
positive (and generally unknown) integers
\begin{equation}\label{eq:rsplit}
r = r^\oplus + r^\ominus,
\end{equation}
so that we can write a BLD truncation of $\tilde E$ as
\begin{align}
\BregTrunc{\tilde E}
& = \sum_{i=1}^{\rplus} \theta_i v_i v_i\Hermitian  + \sum_{i=n - \rminus + 1}^n \theta_i v_i v_i\Hermitian =: \BregTruncPositive{\tilde E} + \BregTruncNegative{\tilde E}.\label{eq:Bregtrunc_diagmatrix}
\end{align}
The main goal of this section is to describe how to approximate these 
matrices, which subsequently give rise to the preconditioner analysed
in \cref{sec:preconditioner:approx}.\\

As mentioned, the values of $\rplus$ and $\rminus$ in \cref{eq:rsplit} are not
known beforehand, so we introduce the parameter $\alpha\in [0,1]$ and, with a
slight abuse of notation, define the integers
\begin{subequations}\label{eq:rsplit:alpha}
\begin{align}
\rplusalpha & := \lfloor\alpha  r\rfloor,\\
\rminusalpha & := r - \rplusalpha,
\end{align}
\end{subequations}
representing the proportion of the rank $r$ budget to be allocated to
approximating the largest positive part of the spectrum of $\tilde E$. 
For a given $\alpha$, we define the approximations $\BregmanApproxAlphaPositive$ and
$\BregmanApproxAlphaNegative$
\begin{align*}
\BregmanApproxAlphaPositive & := \sum_{i=1}^{\rplusalpha} \theta_i v_i v_i\Hermitian,\\
\BregmanApproxAlphaNegative & := \sum_{i=n - \rminusalpha + 1}^n \theta_i v_i v_i\Hermitian,
\end{align*}
and set
\begin{equation}\label{eq:BregmanApproxAlpha}
\BregmanApproxAlpha := \BregmanApproxAlphaPositive + \BregmanApproxAlphaNegative.
\end{equation}
In general, the values of $\alpha$ for which $\BregmanApproxAlpha
= \BregTrunc{\tilde E}$ or $\BregmanApproxAlpha = \tsvd{\tilde E}{r}$ are not
known. We return to this in \cref{sec:numerical_results:large}.
What remains of this section is dedicated to describing how the two terms in
the approximation \cref{eq:BregmanApproxAlpha} can be computed in practice.
The construction of $\BregmanApproxAlphaPositive$ and $\BregmanApproxAlphaNegative$ 
will be discussed in \cref{sec:Bregman_directions:largest} and
\cref{sec:Bregman_directions:smallest}, respectively.

\subsection{Estimating the largest eigenvalues}\label{sec:Bregman_directions:largest}

In this section, we describe how to compute approximations of the matrix
$\BregmanApproxAlphaPositive$ in \cref{eq:BregmanApproxAlpha} which represent
the $\rplusalpha$ largest positive eigenvalues of $\tilde E$. Note that, by definition,
\begin{equation}\label{eq:tildeE}
\tilde E = Q\inv SQ\invHermitian - I,
\end{equation}
so once $\alpha$ is prescribed, we can compute only with the HPD matrix 
$Q\inv SQ\invHermitian$ in practice.
Instead, we estimate the largest eigenvalues of $Q\inv SQ\invHermitian$ using
a Krylov--Schur method \cite{stewart2002krylov}, which only requires 
matrix-vector products with $Q\inv$, $Q\invHermitian$ and $S$. This
method is also used in \MATLAB 2023b's implementation of \texttt{eigs}.
Since $S$ is symmetric, this is a special case of the \emph{thick-restart
Lanczos method} of \cite{wu2000thick}. This method provides a matrix
$H^\oplus \in \mathbb{C}^{n\times\rplusalpha}$ and a diagonal matrix
$D^\oplus \in \mathbb{R}^{\rplusalpha\times\rplusalpha}$ approximating
the largest eigenvalues of $Q\inv SQ\invHermitian$.\\

In view of \cref{eq:tildeE}, we therefore use the following approximation in
\cref{sec:numerical_results:large}:
\begin{equation}\label{eq:BregmanApproxAlphaPositive}
\BregmanApproxAlphaPositive := H^\oplus \big(D^\oplus - I\big) (H^\oplus)\Hermitian.
\end{equation}

\subsection{Smallest eigenvalues}\label{sec:Bregman_directions:smallest}
\newcommand{\lambdamax}{\eta}

Approximating $\BregmanApproxAlphaNegative$ in 
\cref{eq:BregmanApproxAlpha} is tantamount to approximating the eigenspaces
of $Q\inv SQ\invHermitian$ associated with its smallest eigenvalues.
These can be estimated using a \emph{shift-and-invert}
power method \cite{golub2013matrix} which, for some $\lambdamax>0$ requires 
the solution of linear systems of equations involving the matrix $\lambdamax
I - Q\inv SQ\invHermitian$. This can be as expensive as solving the original
problem \cref{eq:Sxb} to begin with. Provided an estimate $\lambdamax$ of
the largest eigenvalue $\theta_1$ of $Q\inv SQ\invHermitian$ such that
$\lambdamax\geq\theta_1$, the matrix
\begin{equation}\label{eq:tildeE:shifted}
\lambdamax I - Q\inv SQ\invHermitian
\end{equation}
is positive semi-definite. The eigenvalues $\theta_1 \geq \ldots \geq
\theta_n$ of $\tilde E$ satisfy
\begin{equation}\label{eq:shift}
\theta_{n-i+1} = \lambdamax - \lambda_i(\lambdamax I - Q\inv SQ\invHermitian) - 1, \quad i = 1,\ldots n,
\end{equation}
where $\lambda_i(\lambdamax I - Q\inv SQ\invHermitian)$ denotes the $i^\textnormal{th}$
largest eigenvalue of $\lambdamax I - Q\inv SQ\invHermitian$.
We can therefore estimate the smallest negative eigenvalues of $\tilde E$
by computing the largest eigenvalues of the shifted matrix
\cref{eq:tildeE:shifted}, which obviates the need for a shift-and-invert
approach. 


The aim is to solve the eigenvalue
problem
\[
\big(\lambdamax I - Q\inv S Q\Hermitian\big) x = \lambda x,
\]
for the $\rminusalpha$ largest eigenvalues and eigenvectors. We can then
compute $\BregmanApproxAlphaNegative$ via \cref{eq:shift}. The Krylov--Schur
method depends on several parameters; the maxmimum size of the subspace,
number of iterations and a tolerance at which eigenvalues are deemed to have
converged. We comment on these in \cref{sec:numerical_results:large}.\\

The strategy for computing $\BregmanApproxAlphaNegative$ is summarised below.

\begin{enumerate}
    \item Estimate $\lambdamax \approx \theta_1$, which may already be available from $\BregmanApproxAlphaPositive$ (or can be computed using the steps in \cref{sec:Bregman_directions:largest}).
    \item Compute $H^\ominus\in\mathbb{C}^{n\times\rminusalpha}$ with orthonormal columns and a diagonal matrix $D^\ominus \in\mathbb{R}^{\rminusalpha\times\rminusalpha}$ representing the largest eigenvalues of $\lambdamax I - Q\inv S Q\Hermitian$ using the Krylov--Schur method mentioned above. This means that $H$ and $D$ approximately satisfy
        \[
            H^\ominus D^\ominus(H^\ominus)\Hermitian \approx \tsvd{\lambdamax I - \tilde E}{r}.
        \]
    \item Undo the shift in \cref{eq:shift} and set
        \[
            \BregmanApproxAlphaNegative := H^\ominus\Big((\lambdamax - 1) I - D^\ominus\Big) (H^\ominus)\Hermitian.
        \]
\end{enumerate}

In principle, a Nyström approximation \cite{martinsson2020randomized} could also be used in step 2, but we have
found that it does not result in good approximations of $\tsvd{\lambdamax I - \tilde E}{r}$.
We believe this is because the gap between its largest eigenvalues (resp. the smallest
eigenvalues of $\tilde E$) is small owing to \cref{eq:eigenvalue_interval} and
therefore makes it difficult to compute them accurately.
The choice of method used in step 2 depends on the specific matrix $S$ and the
level of fill in the approximate factor $Q$. The same applies to the 
approximation in \cref{sec:Bregman_directions:largest}, which enters the approximation
of $\BregmanApproxAlphaNegative$ through $\lambdamax$.

\section{Approximating the proposed preconditioner}\label{sec:preconditioner:approx}

\Cref{eq:BregmanApproxAlpha} introduced $\BregmanApproxAlpha = \BregmanApproxAlphaPositive
+ \BregmanApproxAlphaNegative$ as a way of approximating $\BregTrunc{\tilde E}$ as a function
of $\alpha \in [0, 1]$. An approximation of the proposed preconditioner
in \cref{eq:Bregman_preconditioner} is therefore given by
\begin{equation}\label{eq:Bregman_preconditioner:approx}
\PrecondBregAlpha{S}{\alpha} = Q(I + \BregmanApproxAlpha)Q\Hermitian.
\end{equation}

In practice, the matrix in \cref{eq:Bregman_preconditioner:approx} is not
formed explicitly, and only the applications of $Q\inv$, its Hermitian
adjoint, and $(I + \BregmanApproxAlpha)\inv$ are necessary. Given a spectral
decomposition of $\BregmanApproxAlpha$, the cost of applying the latter is 
$O(nr)$ by the Woodbury matrix identity. As a result, applying the preconditioner
to a vector costs
\[
O(\MatVec_Q + nr).
\]

Cost of storage and construction are linked through the choice of $Q$. A sparse
approximation $QQ\Hermitian$ to $S$ may reduce storage requirements and time
to apply the inverse of $Q$ (and its Hermitian adjoint), possibly at the expense
of a larger error $\tilde E$. For instance, if $Q$ were to be computed as an 
incomplete Cholesky with a non-zero drop threshold (i.e. entries of the Cholesky
factors of $S$ above this threshold are included in $Q$), then we would expect 
a better approximate factorisation for smaller values of such a tolerance at
the cost of potentially denser factors. We therefore expect to pay for better
approximations $QQ\Hermitian \approx S$ by increasing storage cost through $Q$,
which possibly reduces both the magnitude of the extremal eigenvalues of $\tilde E$
and its rank. Irrespective of $\alpha$, the preconditioner requires storage of 
$O(\texttt{nnz}(Q) + nr)$.\\

\begin{figure}
    \centering
    \includegraphics[width=0.48\linewidth]{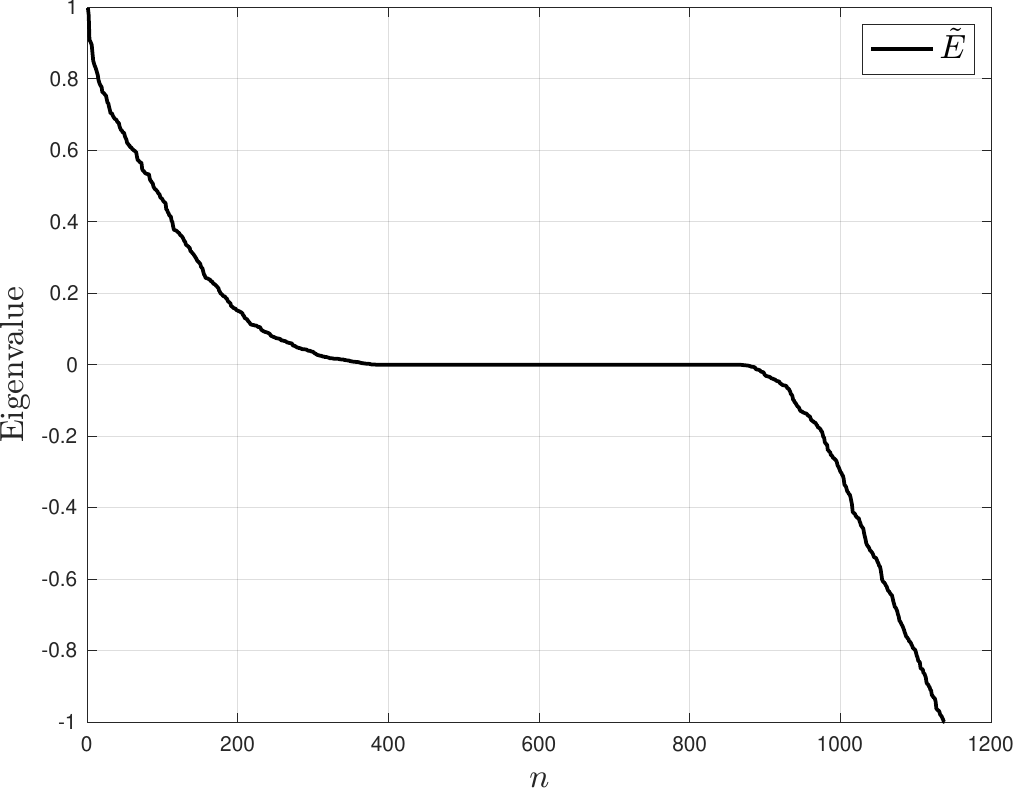}
    \includegraphics[width=0.47\linewidth]{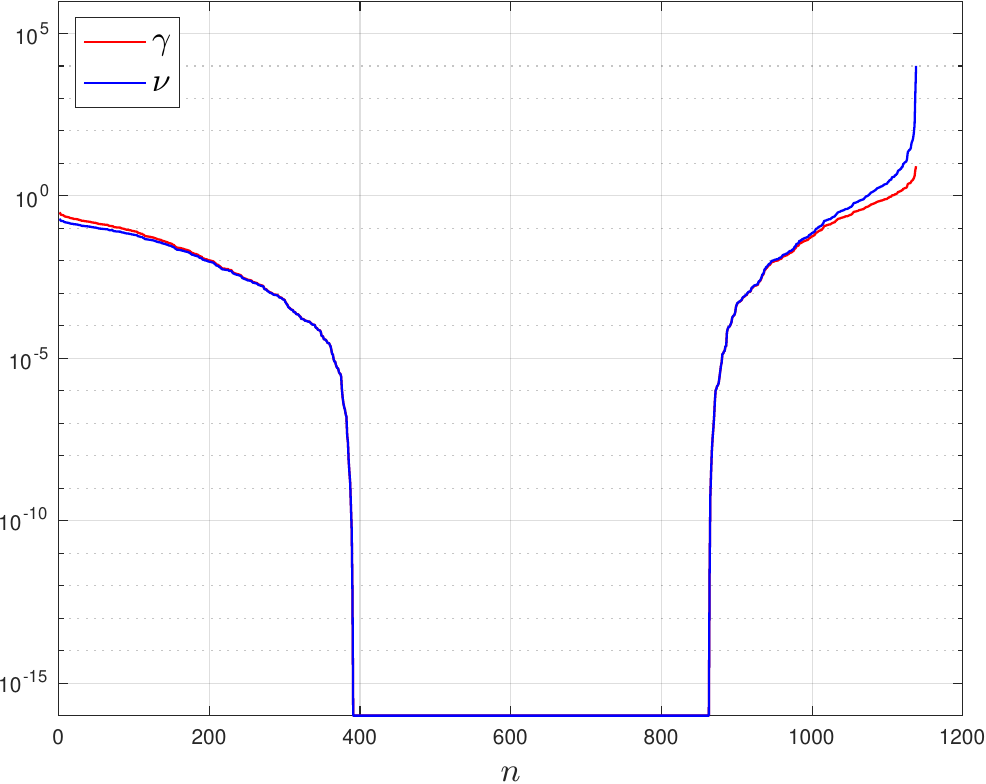}
    \caption{Left: spectrum of $\tilde E = Q\inv S Q\invHermitian - I$, where $S$ 
    is the matrix \texttt{HB/1138\_bus} taken from SuiteSparse, and $Q$ is its zero fill
    incomplete Cholesky factorisation. Right: values of $\tilde E$ under the image of $\gamma$ and $\nu$ (defined in \cref{eq:bregman_curve} and \cref{eq:bregman_curve:inv}, respectively).}
    \label{fig:spectrum_tildeE}
\end{figure}

\Cref{fig:spectrum_tildeE} shows the spectrum of $\tilde E$ for one
of the examples in \cref{sec:numerical_results:ichol}.
If the numerical rank of $\tilde E$  is large, we may also require a large
$r$ in the preconditioner \cref{eq:Bregman_preconditioner:approx}.
Even if the numerical rank of $\tilde E$ is small, there is no guarantee
that the largest positive and smallest negative eigenvalues are small in magnitude.
Roughly speaking, a steep slope in the spectrum can make it easier for Krylov--Schur
to converge, as the largest eigenvalues are more isolated from the rest of the spectrum.
Knowledge of the spectrum of $\tilde E$ can therefore inform the choice of the
method used to construct $\BregmanApproxAlphaPositive$.
$\alpha$ also affects construction costs through \cref{eq:rsplit:alpha}.
We expect that it is easier and cheaper to compute $\BregmanApproxAlphaPositive$ than
$\BregmanApproxAlphaNegative$, but it depends on the spectrum, as mentioned above.
Its construction costs $O(r^3)$ operations due to the inverse needed to apply $(I
+ \BregmanApproxAlpha)\inv$ in excess of costs 
discussed in \cref{sec:Bregman_directions:largest,sec:Bregman_directions:smallest}.\\

In summary, if $S$ is large and sparse, a good candidate $Q$ for use in 
\cref{eq:Bregman_preconditioner:approx} keeps both the numerical rank of
$\tilde E$ small and reduces the condition number of $Q\inv SQ\invHermitian$.
From what precedes, we know that the Bregman log determinant divergence takes on
large values for small negative eigenvalues of $\tilde E$ close to $-1$ (cf. 
\cref{fig:bregman_curve,eq:Divergence:U=V}). As a
result, an approximate factorisation should try to eliminate these from the
error so that the $\tilde E$ is "as close to" positive semi-definite as possible.
In this case, we will expect the Nyström approximation to perform well and be
a good alternative to the Krylov--Schur method mentioned above. We describe this
method in more detail in the next section.
The authors are not aware of methods of generating $Q$ that can control the 
inertia of the $S - QQ\Hermitian$, although this would of course
be of interest.

\subsection{Randomised methods}

Another option to approximate $\BregTrunc{\tilde E}$ is to adopt a randomised
approach. Randomised linear algebra \cite{halko2011finding} encompasses a popular
family of algorithms that can yield good low-rank approximations. These are 
particularly useful when it is prohibitively expensive to compute a truncated
SVD directly, since they only require the action of a matrix. The \emph{randomised
SVD} \cite[Algorithm 8]{martinsson2020randomized} (RSVD) is applicable to general
square matrices $X$ of order $n$. To construct a RSVD, $X$ is first \emph{sketched}
by multiplication with an $n\times r$ matrix $\Omega$ with e.g. normal Gaussian 
i.i.d. entries:
\begin{equation}\label{eq:sketch}
Y = X\Omega.
\end{equation}
A reduced $QR$ decomposition of $Y$ yields a $n\times r$ matrix $\mathcal{O}$
with orthonormal columns whose range approximates that of $X$. Using
\MATLAB-style notation, we can compute a reduced SVD
\[
\mathbf{\hat{U}} \mathbf{\Sigma} \mathbf{V}\Hermitian = \texttt{svd\_econ}(X,\,r).
\]
Setting $\mathbf{U} :=\mathcal{O} \mathbf{\hat{U}}$ we obtain the approximation
\[
X \approx \mathbf{U} \mathbf{\Sigma} \mathbf{V}\Hermitian.
\]
We denote by $\MatVec_X$ the cost of computing matrix-vector products with $X$.
The randomised SVD comes at a cost of $O(\MatVec_X r + nr^2)$ which is an
improvement on the naïve $O(n^3)$ cost of truncating a full SVD.\\

For Hermitian matrices, one of the most popular methods in randomised linear 
algebra is the \emph{Nystr\"om} approximation
\cite{gittens2016revisiting,drineas2005nystrom,nystrom1930praktische}. We
define the rank $r$ Nyström approximation of $X\in\Sn$ with respect to a 
Gaussian sketching matrix $\Omega \in \Rnk$, $k\geq r$:
\cite{tropp2017fixed}
\begin{equation}\label{eq:Nystrom}
\Nystrom{X} := X\Omega \tsvd{\Omega\Hermitian X \Omega}{r}^+ (X\Omega)\Hermitian.
\end{equation} 
The cost of this approximation is $O(\MatVec_X k + k^3)$
in construction cost and requires only $k$
products with the matrix. This approximation can be derived as range-restricted
matrix approximation that is optimal in the Bregman divergence
\cite{bock2023preconditioner} which corresponds to a single iteration of
Bregman's algorithm for rank-deficient matrices with no constraints 
\cite[Section 4.3]{kulis2009low}. The approaches above can be combined with
oversampling and power iterations, and can also be adapted to use only a single
"view" of the matrix via \cref{eq:sketch}. See
\cite{martinsson2020randomized,halko2011finding} for comprehensive reviews.\\

Modifications can be made to the Nystr\"om approximation in order 
to produce a numerically robust procedure for indefinite Hermitian matrices 
\cite{nakatsukasa2023randomized}. It is similar
to \cref{eq:Nystrom}, where the sketching matrix $\Omega$ is a $n\times cr$ 
matrix for some appropriate oversampling $c>1$ (e.g. $c=1.5$, $c=2$). We
repeat the definition for convenience:
\begin{equation}\label{eq:Nystrom:indefinite}
\NystromIndefinite{X} := X\Omega \tsvd{\Omega\Hermitian X \Omega}{r}^+ (X\Omega)\Hermitian.
\end{equation} 
By more generous oversampling and truncating in the inner
matrix, $\tsvd{\Omega\Hermitian X \Omega}{r}$ we avoid taking the inverse of very
small eigenvalues, which could blow up and render the approximation 
$\NystromIndefinite{X}$ inaccurate. In \cref{sec:numerical_results:large},
a comparison will be made between a preconditioner based on this approximation 
along with a numerical study of how different choices of $\alpha$ in 
\cref{eq:Bregman_preconditioner:approx} affects the performance of PCG.

\section{Numerical experiments with PCG}\label{sec:numerical_results}

In this section, we investigate the performance of the preconditioners numerically.
In \cref{sec:numerical_results:ichol}, we consider small matrices for
which the low-rank approximations associated with the preconditioners $\PrecondBreg{S}$,
$\PrecondBregRev{S}$ and $\PrecondSVD{S}$ can be computed to machine precision. In
\cref{sec:numerical_results:large}, we use larger instances where we use the
preconditioner $\PrecondBregAlpha{S}{\alpha}$ and other practical approximations.\\

The numerical results shown below were generated using \MATLAB 2023b.
The code to reproduce them is available at \url{https://www.github.com/andreasbock/bldp-matlab}
and was run on a node featuring two Intel Xeon Processor 2650v4 (12 core, 2.20GHz) with
256 GB memory (16 x 16 GB DDR4-2666).

\subsection{Improving incomplete Cholesky using low-rank compensation}\label{sec:numerical_results:ichol}

In this section, we show empirically that PCG using a preconditioner based on a BLD 
truncation can reduce the number of iterations when compared to an analogous preconditioner
based on a TSVD. We select matrices from the SuiteSparse Matrix Collection\footnote{Available
at \url{https://sparse.tamu.edu/}.} \cite{kolodziej2019suitesparse}, as shown in 
\cref{table:ichol_problems}. We consider matrices of modest size in order to compute
BLD truncations and TSVDs with high accuracy.
Some of these correspond to least-squares problems involving $A\in\Rmn$, for which we
consider the corresponding normal equations $S = A\Hermitian b$, for $S=A\Hermitian A$
and a random non-zero $b\in\Realn$. If the matrix is symmetric positive definite,
we simply consider $Sx=b$.\\

\begin{table}
\caption{Test set from the SuiteSparse Matrix Collection. We consider both
symmetric positive definite (SPD) matrices of size $n\times n$ and $m \times n$ matrices
for $m>n$ (transposing if necessary). For the latter, we solve the associated normal 
equations. The number of non-zero entries in the matrix is denoted by $\texttt{nnz}$.
}\label{table:ichol_problems}
\begin{center}
\begin{tabular}{l r r r}
\toprule 
Problem (SPD) & & $n$ & $\texttt{nnz}$\\\midrule
\texttt{HB/bcsstk22} &  & 138 & 696\\
\texttt{HB/lund\_a}  & & 147 & 2449\\
\texttt{HB/bcsstk05}  & & 153 & 2423\\
\texttt{Pothen/mesh2e1}  & & 306 & 2018\\
\texttt{HB/494\_bus}  & & 494 & 1666\\
\texttt{HB/662\_bus}  & & 662 & 2474\\
\texttt{HB/bcsstk08}  & & 1074 & 12960\\
\texttt{HB/1138\_bus} &  & 1138 & 4054\\\midrule
Problem (normal equations) & $m$ & $n$ & $\texttt{nnz}$\\\midrule
\texttt{Meszaros/l9} & 1483 & 244 & 4659\\
\texttt{LPnetlib/lp\_sctap1} & 660 & 300 & 1872\\
\texttt{LPnetlib/lp\_bandm} & 472 & 305 & 2494\\
\texttt{HB/illc1850} & 1850 & 712 & 8636\\
\texttt{LPnetlib/lp\_sctap3} & 3340 & 1480 & 9734\\\bottomrule
\end{tabular}
\end{center}
\end{table}

We let $QQ\Hermitian \approx S$ be given as a zero fill incomplete 
Cholesky factorisation of a target matrix $S$, and consider the following
cases:
\begin{enumerate}
    \item no preconditioner,
    \item symmetric preconditioning using the zero fill incomplete Cholesky factors $Q$ described above,
    \item left-preconditioning using $\PrecondSVD{S} = Q(I + \tsvd{E}{r})Q\Hermitian$ (\cref{eq:scaled}),
    \item left-preconditioning using $\PrecondBreg{S} = Q(I + \BregTrunc{E})Q\Hermitian$ (\cref{eq:Bregman_preconditioner}),
    \item and left-preconditioning using $\PrecondBregRev{S} = Q(I + \BregTruncRev{E})Q\Hermitian$ (\cref{eq:Bregman_preconditioner:reverse}).
\end{enumerate}

A major disadvantage of the proposed preconditioners is that it depends heavily on
the spectrum of $\tilde E$. An appropriate choice of rank $r$ that balances the
computational cost of the truncations with any gain related to convergence of PCG is
typically not known beforehand. Here, the focus is on experimenting with
the preconditioners above and study how the truncations differ.
We therefore use values of $r$ relative to $n$ for the low-rank approximations
\begin{equation}\label{eq:rank_variation}
r = \lfloor n \times \epsilon \rfloor, \quad \epsilon \in \{ 0.01, 0.05, 0.1\}.
\end{equation}

\edef\totalrows{\thecsvrow}%
\begin{table}[tbhp]
\scriptsize
\caption{Results for the matrices in \cref{table:ichol_problems} selected 
from the SuiteSparse Matrix Collection.
The residual tolerance was set to $10^{-10}$. For each row, we have highlighted the
smallest number across the relevant columns. The $\dagger$ symbol on the iterations
means that the associated low-rank truncations coincide.}\label{table:ichol_results}
\begin{center}
\begin{tabular*}{\textwidth}{l c @{\extracolsep{\fill}} *{13}{c} }
\toprule 
\multirow{2.2}{*}{Problem} & \multirow{2.2}{*}{$n$} & \multirow{2.2}{*}{$r$} & \multicolumn{5}{c}{Iteration count} & \multicolumn{3}{c}{$\kappa_2$} & \multicolumn{3}{c}{$\BregmanLogDet(S, \cdot)$}\\ \cmidrule(l{0pt}r{2pt}){4-8} \cmidrule(l{2pt}r{2pt}){9-11} \cmidrule(l{2pt}r{2pt}){12-14}
\multicolumn{3}{c}{} & $I$ & $Q$ & $\PrecondBregRev{S}$ & $\PrecondSVD{S}$ & $\PrecondBreg{S}$ & $\PrecondBregRev{S}$ & $\PrecondSVD{S}$ & $\PrecondBreg{S}$ & $\PrecondBregRev{S}$ & $\PrecondSVD{S}$ & $\PrecondBreg{S}$\\
\midrule
\csvreader[
head to column names,
after line=\ifthenelse{\equal{\switch}{0}}{\\}{\\\hline},
]{csvs/small/results_out.csv}{Name=\Name}{{\expUScore{\Name}} & \n & \r & \iternopc & \iterichol & \iterrbreg & \itersvd & \iterbreg & \condrbreg & \condsvd & \condbreg & \divrbreg & \divsvd  & \divbreg}
\\[-\normalbaselineskip]\hline
\end{tabular*}
\end{center}
\end{table}

\Cref{table:ichol_results} shows the condition number, Bregman divergence and the number of
PCG iterations at termination, which either occurs if the relative residual is below the tolerance 
$10^{-10}$ or after 100 iterations. The entries "-" in the table correspond to instances
where the tolerance was not reached, either due to PCG stagnating or the maximum number of 
iterations was reached. The $\dagger$ symbol means that the associated
low-rank truncations coincide. 
Since we consider only matrices of small size, we do 
not report on timings since $\PrecondSVD{S}$, $\PrecondBreg{S}$ and $\PrecondBregRev{S}$ are
all constructed from a full eigendecomposition of $S$ and incur similar costs of application
through $Q$ and the Woodbury matrix identity. In \cref{sec:numerical_results:large}, we 
consider larger matrices and report revelant timings.\\

\Cref{table:ichol_results} shows that preconditioning is essential, since unpreconditioned
CG fails to converge in all cases. There are also several matrices for which
convergence is not obtained
using incomplete Cholesky. As expected, the table shows that incorporating a low-rank matrix
can be beneficial for convergence, and increasing the rank results in fewer iterations for
convergence required by all preconditioners.\\

We observe a few trends for the preconditioners $\PrecondBreg{S}$, $\PrecondBregRev{S}$
and $\PrecondSVD{S}$. Overall, $\PrecondBreg{S}$ and $\PrecondBregRev{S}$ are no worse than
$\PrecondSVD{S}$ in terms of iterations, and in most cases improve (albeit sometimes only modestly)
on the TSVD-based preconditioner. The preconditioners $\PrecondBreg{S}$ and $\PrecondBregRev{S}$
perform similarly, and are generally only a few iterations from converging in the same number
of iterations. We observe that while the three truncations can differ, they can lead
to the same results (e.g. \texttt{HB/bcsstk08} with larger $r$). We also observe that 
the preconditioner that yields the smallest preconditioned $\kappa_2$ condition number
of the preconditioned matrix is not always the one for which PCG converges
in the fewest number of steps. However, the divergence between the preconditioner and the 
matrix $S$ (or vice versa) correlates with the fewest iterations required (possibly joint fewest).
These results are not conclusive, and we do not claim that there are no cases where
$\PrecondSVD{S}$ outperforms $\PrecondBreg{S}$ and $\PrecondBregRev{S}$, but could suggest
that the Bregman truncations can provide a better valuation of the trade-off between positive
and negative eigenvalues of $\tilde E$ in terms of PCG convergence.\\

We now have a closer look at a few select instances from \cref{table:ichol_results}.
We see that the three preconditioners $\PrecondBreg{S}$, $\PrecondBregRev{S}$ and
$\PrecondSVD{S}$ appear to yield similar results for the matrix \texttt{HB/lund\_a}. 
Looking at \texttt{HB/lund\_a} for $r=7$ in particular,
\cref{fig:lund_a} shows the eigenvalues of error $\tilde E$, convergence of the relative
residual, and two curves corresponding to the eigenvalues on the Bregman and SVD curves
(i.e. \cref{eq:bregman_curve} and the absolute value, respectively) for $\PrecondBreg{S}$ 
and $\PrecondSVD{S}$. Although there is some overlap in the eigenvalues
selected by the two associated truncations, we also see a few differences. However, due to the spectrum of $\tilde E$ being
clustered around zero, the different truncations contribute more or less equally to the
Bregman log determinant divergence in \cref{eq:Divergence:U=V}, which is also clear from the
values of the divergences in \cref{table:ichol_results}. As a result,
changing $r$ does not result in large deviations between the two preconditioners. Indeed,
the preconditioners $\PrecondBreg{S}$, $\PrecondBregRev{S}$ and $\PrecondSVD{S}$ sometimes
coincide for this example. We notice from the bottom left panel of \cref{fig:lund_a} 
that the rank of $\tilde E$ is also smaller than $n$, with many zero eigenvalues,
making it more likely that the truncations coincide because they have fewer eigenvalues
on which to disagree. Furthermore, the curve $\gamma$ in \cref{eq:bregman_curve} used
to define the BLD  truncation is almost symmetric about zero (and similarly
for the $\nu$ in \cref{eq:bregman_curve:inv} for the RBLD), which also suggests
that the truncations will be similar as for a TSVD.\\

\begin{figure}
    \centering
    \includegraphics[scale=0.39]{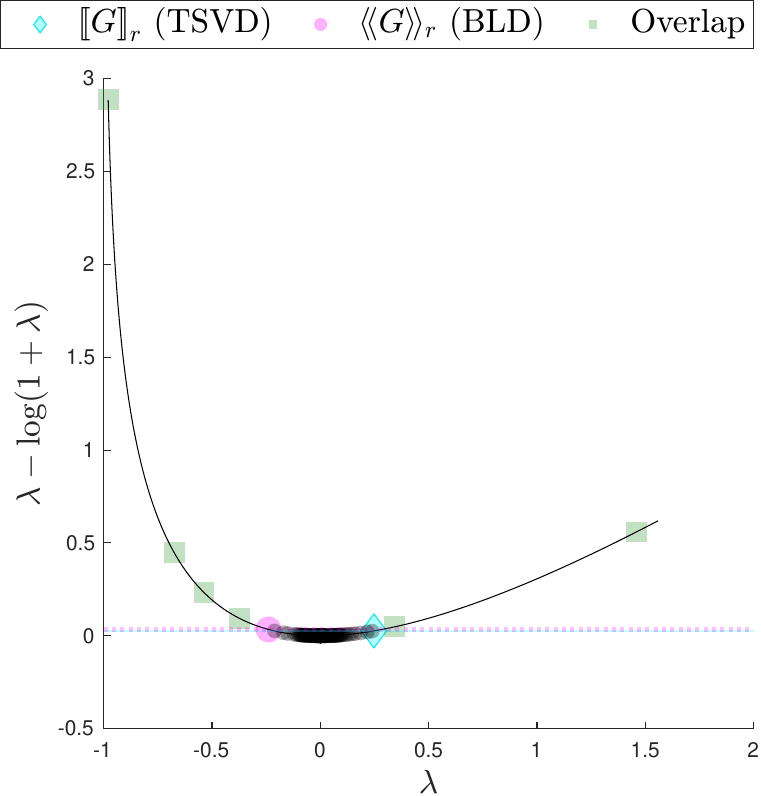}
    \includegraphics[scale=0.39]{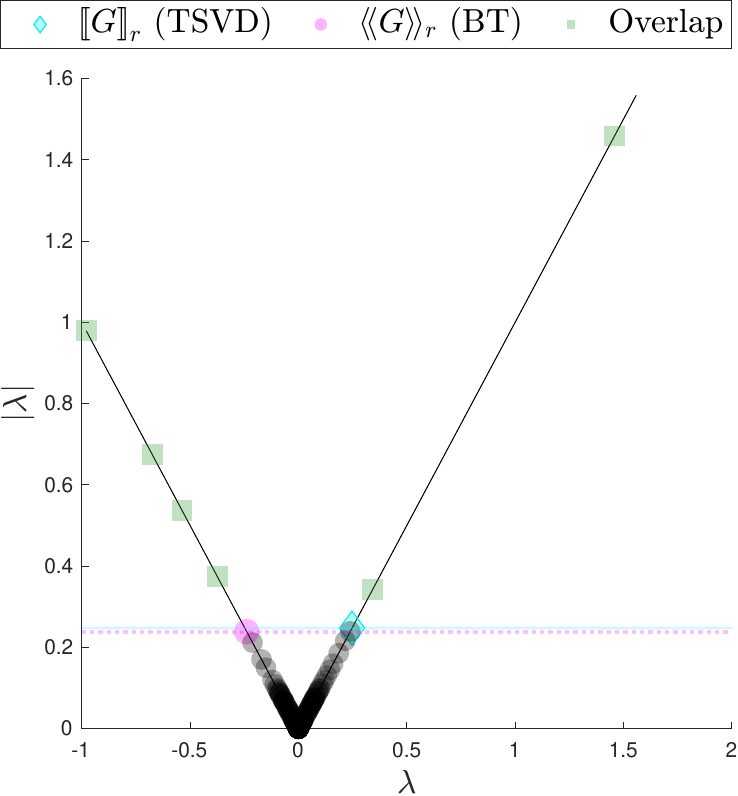}\\
    \includegraphics[scale=0.39]{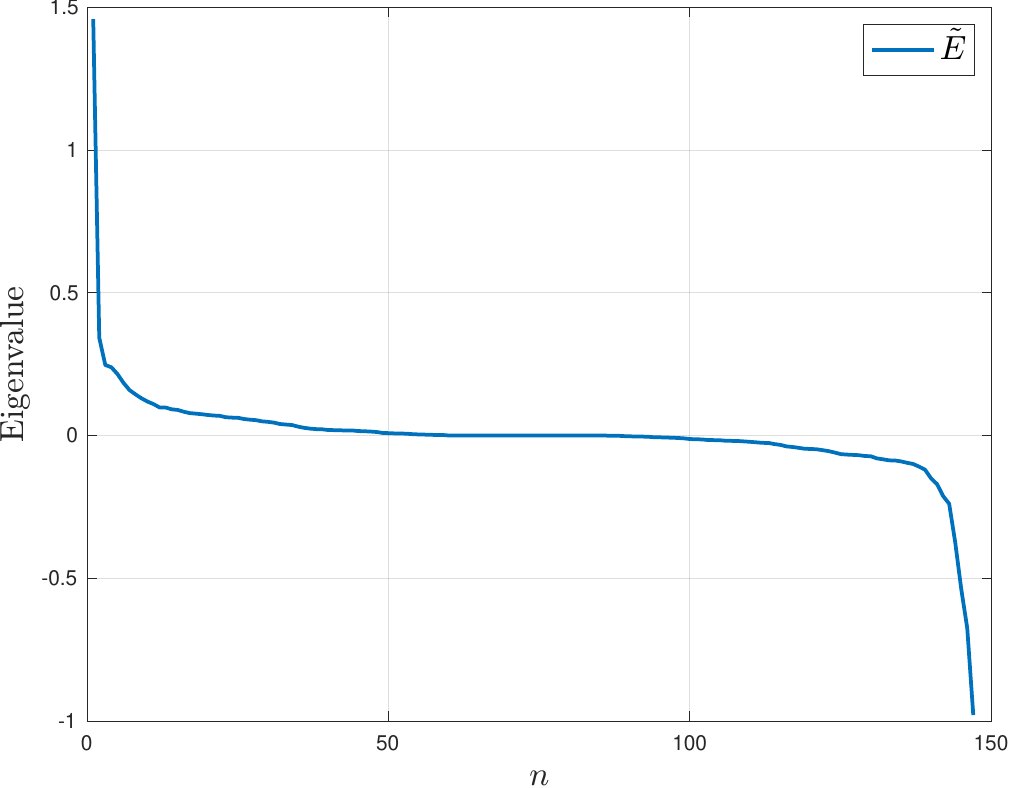}
    \includegraphics[scale=0.39]{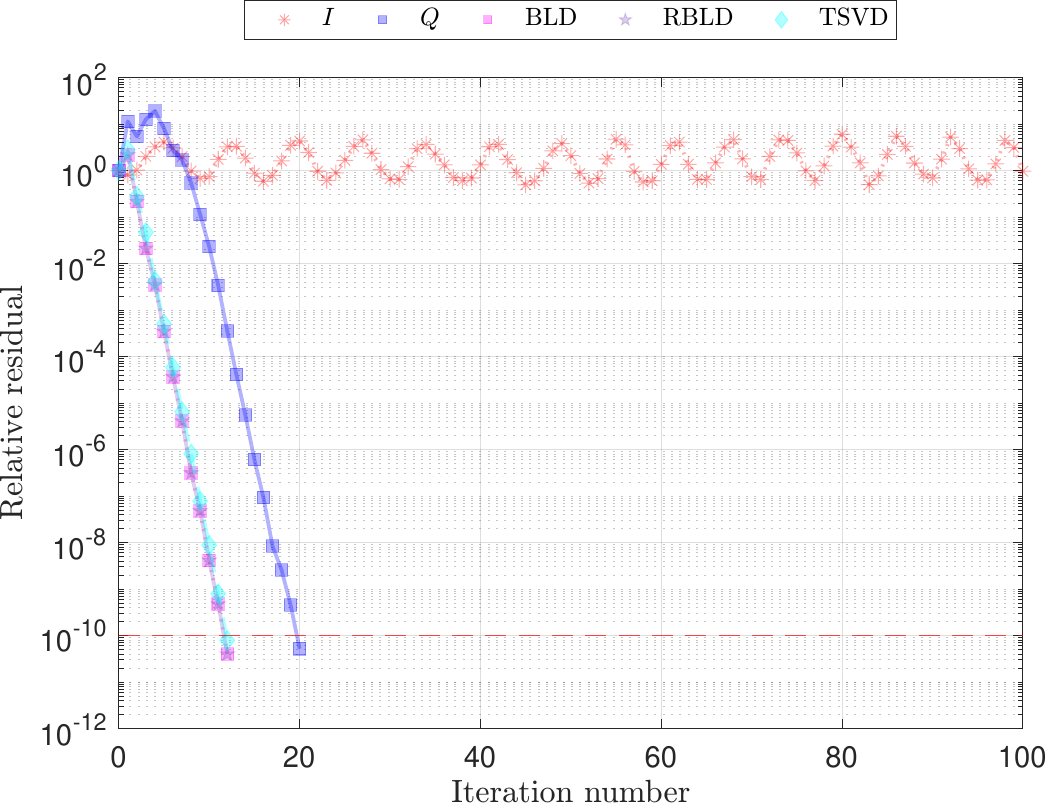}
    \caption{Experiment with \texttt{HB/lund\_a} ($n=147$ and $r=7$). Top: SVD and Bregman curve. Bottom: eigenvalues of $\tilde E$ and PCG convergence.}
    \label{fig:lund_a}
\end{figure}

Next we look at \texttt{HB/illc1850} which is the most ill-conditioned matrix among
the test cases and no convergence to tolerance is observed for $r=7$. Convergence is
achieved as $r$ is increased, with the $\PrecondBreg{S}$ converging in the fewest
amount of iterations. \Cref{fig:illc1850} shows which eigenvalues are included in
the truncations $\BregTrunc{\tilde E}$ and $\tsvd{\tilde E}{r}$. While there is
some overlap, this case nicely illustrates the preferences of the two truncations;
a TSVD prefers eigenvalues of $\tilde E$ of large magnitude, while the BLD truncation
includes more negative eigenvalues, even if they are relatively small in magnitude. The truncations
differ primarily since $\tilde E$ has many positive eigenvalues that are larger than
$1$ in magnitude, and we see a clear difference in PCG convergence between the various 
preconditioners (bottom panels of \cref{fig:illc1850}). As $r$ increases, so does
the similarity between the truncations, and therefore the preconditioners.
For brevity, we do not show the eigenvalues of $\BregTruncRev{\tilde E}$,
although from \cref{fig:bregman_curve:inv} it is clear that this truncation  
appraises negative eigenvalues similarly to $\BregTrunc{\tilde E}$.\\

\begin{figure}
    \centering
    \includegraphics[scale=0.39]{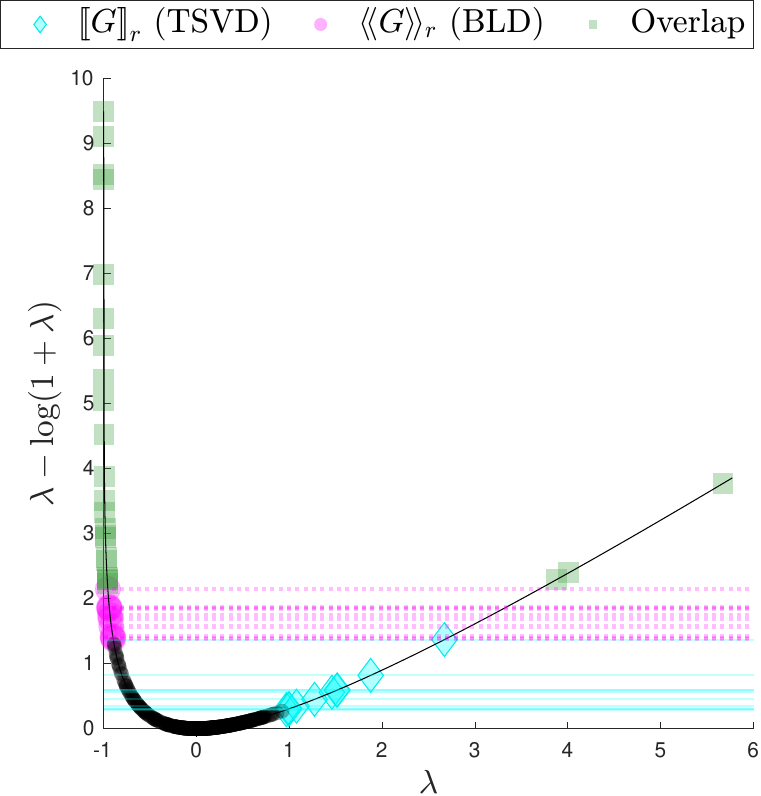}
    \includegraphics[scale=0.39]{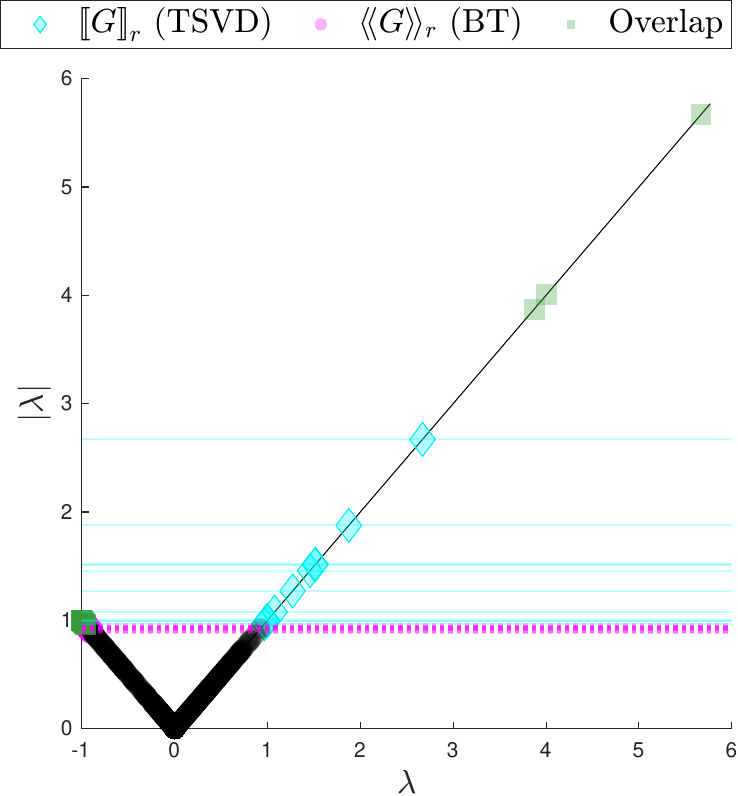}\\
    \includegraphics[scale=0.32]{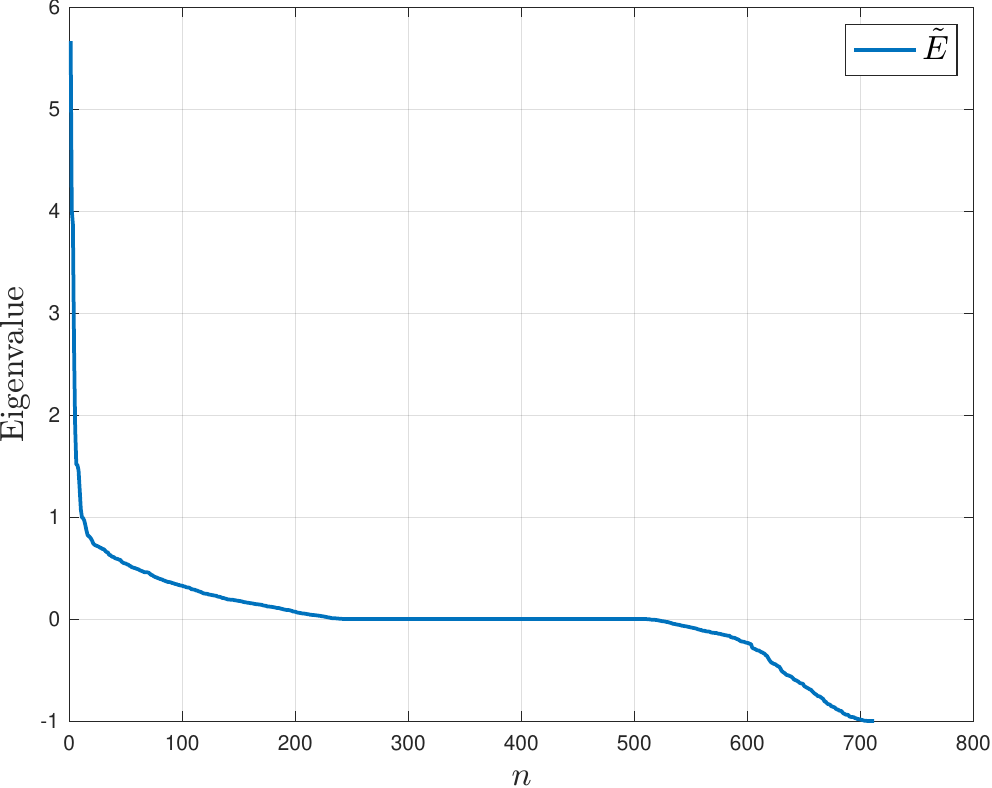}
    \includegraphics[scale=0.35]{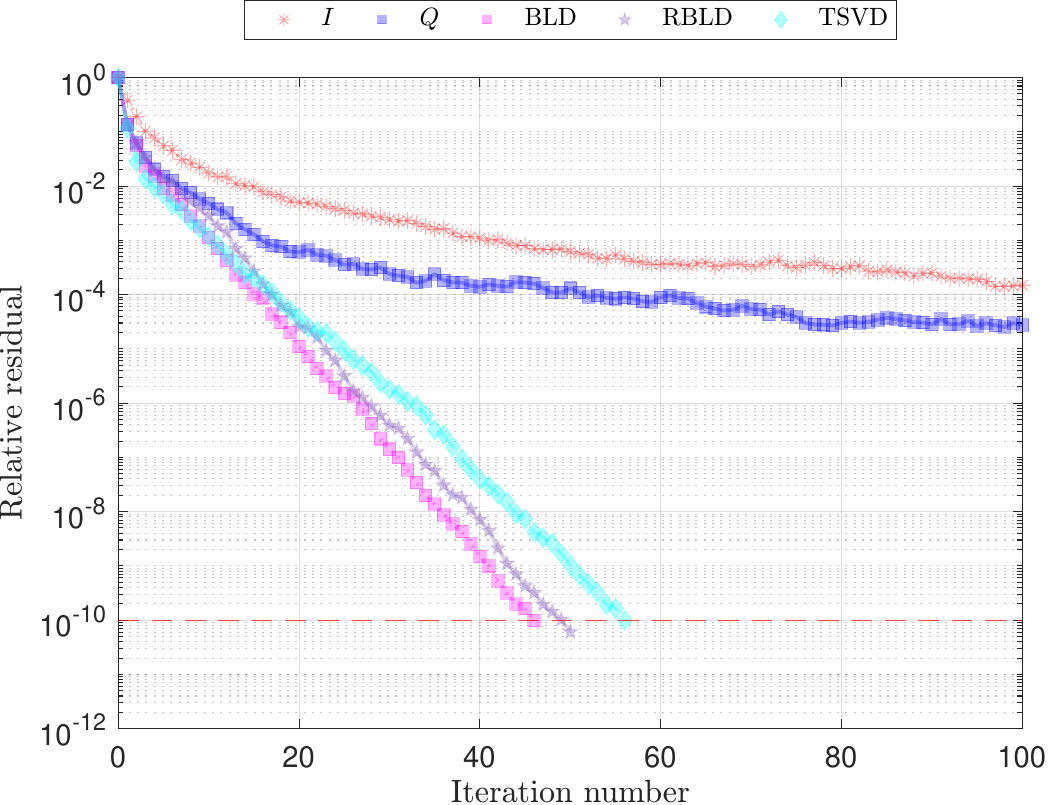}
    \caption{Experiment with \texttt{HB/illc1850} ($n=712$ and $r=35$). Top: SVD and Bregman curve. Bottom: eigenvalues of $\tilde E$ and PCG convergence.}
    \label{fig:illc1850}
\end{figure}

Overall, the results suggest that the novel truncations in
\cref{def:Bregman_truncation,def:Bregman_truncation:inverse} may capture some desired
nearness between a matrix and its preconditioner.
We emphasise that the difference between the two preconditioners can vanish as $r$ increases,
depending on $n$ and the spectrum of $\tilde E$. However, in large-scale applications, 
$r$ may be relatively small compared to $n$, so $\PrecondBreg{S}$ or $\PrecondBregRev{S}$ could
potentially be quite different to $\PrecondSVD{S}$. This could indicate that there can be noticeable
rewards in terms of PCG convergence for judiciously choosing which eigenvalues to choose in a 
truncation. We stress that there is need for theory to support this.

\subsection{Results for larger instances}\label{sec:numerical_results:large}

The exact truncations used in the previous section are typically out of
reach for large $n$. In this section, we experiment with the larger matrices
in \cref{table:ichol_problems:large} and examine the effectiveness of the 
approximations described in \cref{sec:Bregman_directions}.
As in the previous section, $Q$ is found as a zero fill incomplete
Cholesky factorisation of the target matrix $S$. We acknowledge that 
there may exist other incomplete Cholesky factorisations (e.g. by 
setting a non-zero "drop tolerance" as in \MATLAB's implementation,
at the cost of a denser factor) or other bespoke approaches that 
result in improved convergence of PCG. However, it is not always possible
to predict the storage required for a certain drop tolerance and the
consequence (hopefully, reduction) in PCG iterations. As mentioned before,
it is not known what rank $r$ 
produces the right trade-off in terms of computational overhead and
gain in PCG convergence, but the benefit of using 
\cref{eq:Bregman_preconditioner:approx} is that storage of the low-rank
terms remains predictable as $r$ increases. The experiments here focus on investigating
how the convergence of PCG behaves as a function of which parts of the
spectrum of $\tilde E$ we approximate via \cref{eq:BregmanApproxAlpha}.
For the same reasons as in \cref{sec:numerical_results:ichol},
we experiment with values of $r$ relative to the size $n$ of the matrix
\begin{equation}\label{eq:rank_variation:large}
r = \lfloor n \times \epsilon \rfloor, \quad \epsilon \in \{ 0.0025,\, 0.0075\}.
\end{equation}

\begin{table}
\caption{Test set of large $n\times n$ SPD matrices from the SuiteSparse Matrix
Collection. The number of non-zero entries in the matrix is denoted by $\texttt{nnz}$.}
\label{table:ichol_problems:large}
\begin{center}
\begin{tabular}{l l c c c c}
\toprule 
Problem & Application & $n$ & $\texttt{nnz}$ & $\kappa_2$\\\midrule
\texttt{GHS\_psdef/crankseg\_1} & Structural & 52804 & 10614210 & 2.2296e+08\\
\texttt{Oberwolfach/gyro\_m} & Model Reduction & 17361 & 340431 & 2.503983e+06\\
\texttt{UTEP/Dubcova1} & 2D/3D Problem & 16129 & 253009 & 9.971199e+02\\
\texttt{GHS\_psdef/apache1} & Structural & 80800 & 542184 & 3.9883e+06\\
\texttt{Rothberg/cfd1} & Fluid Dynamics & 70656 & 1825580 & 1.3351e+06\\
\texttt{Schmid/thermal1} & Thermal & 82654 & 574458 & 4.9625e+05\\\bottomrule
\end{tabular}
\end{center}
\end{table}

As a surrogate for $\BregTrunc{\tilde E}$, we shall use $\BregmanApproxAlpha$
in \cref{eq:BregmanApproxAlpha} for a prescribed set of values $\alpha \in
[0, 1]$ using the approaches mentioned in 
\cref{sec:Bregman_directions:largest,sec:Bregman_directions:smallest}.
We let $\texttt{maxit}_\textnormal{KS}$, and $\texttt{tol}_\textnormal{KS}$
and $p_\textnormal{KS}$ denote the maximum iterations, convergence tolerance
and Krylov subspace dimension \emph{slack}, respectively. That is, we allow a
subspace dimension of $r+p_\textnormal{KS}$. We use
$\texttt{tol}_\textnormal{KS}=10^{-2}$ in all experiments, and initialise the
Krylov method with a random vector. For the smallest
value of $r$ in \cref{eq:rank_variation:large}, we use $\texttt{maxit}_\textnormal{KS}=60 = 
p_\textnormal{KS} = 60$ for all matrices. For the larger value of $r$,
$\texttt{maxit}_\textnormal{KS}=p_\textnormal{KS} = 100$.
For simplicity, these values have been set generously so that the majority eigenvalues
converge to  the tolerance for all the matrices in \cref{table:ichol_problems:large}.
The values of $\texttt{maxit}_\textnormal{KS}$ and $p_\textnormal{KS}$ may be
reduced depending on the matrix, and, in particular, for smaller $r$.\\

$\tsvd{\tilde E}{r}$ will be replaced by either the Nyström approximation
$\Nystrom{\tilde E}$ or the modified Nyström $\NystromIndefinite{\tilde E}$
for indefinite matrices. We define $\Nystrom{\tilde E}$ using a sketching
matrix $\Omega$ of size $n\times (r + p)$, $p=60$ (\cref{eq:Nystrom}). For
$\NystromIndefinite{\tilde E}$, we set $c=1.5$ (\cref{eq:Nystrom:indefinite}).\\

We also introduce
\[
\PrecondSVDKS{S} = Q(I + \tsvdks{\tilde E}{r})Q\Hermitian,
\]
where $\tsvdks{\tilde E}{r} \approx \tsvd{\tilde E}{r}$ has been constructed 
using the \MATLAB's \texttt{eigs} routine mentioned previously and approximates
$\tilde E$ in sense of its largest magnitude eigenvalues. This uses the same
parameters as described above.\\

 To summarise, we compare PCG results for the following preconditioners:
\begin{enumerate}
    \item no preconditioner,
    \item symmetric preconditioning using $Q$,
    \item $\PrecondNys{S} = Q(I + \Nystrom{\tilde E})Q\Hermitian$,
    \item $\PrecondNysIndef{S} = Q(I + \NystromIndefinite{\tilde E})Q\Hermitian$,
    \item $\PrecondSVDKS{S} = Q(I + \tsvdks{\tilde E}{r})Q\Hermitian$,
    \item and $\PrecondBregAlpha{S}{\alpha} = Q(I + \BregmanApproxAlpha)Q\Hermitian$,
    for \[\alpha \in \{0,\, 0.25,\, 0.5,\, 0.75,\, 1\}.\]
\end{enumerate}

\Cref{table:GHS_psdef_crankseg_1,table:Dubcova2,table:Dubcova1,table:GHS_psdef_apache1,table:Rothberg_cfd1,table:Schmid_thermal1} show the relative residual upon termination, the number
of iterations and matrix-vector products with the matrix, as well as construction
and solve times. The convergence tolerance is set to $10^{-10}$ after a maximum of
350 iterations.\\


Foremost, we see that neither of the Nyström-based preconditioners $\PrecondNys{S}$
and $\PrecondNysIndef{S}$ perform well. We cannot expect these randomised approaches to
yield highly accurate approximations of $\tilde E$, but they are
significantly cheaper to construct in terms of both time and matrix-vector products.
If the setting allows for a higher rank $r$, we may expect to see improved results
using such randomised approaches.\\

In \cref{table:GHS_psdef_crankseg_1,table:Dubcova1,table:Dubcova2}, the preconditioner
$\PrecondBregAlpha{S}{\alpha}$ results in fewer or as many PCG iterations than 
 $\PrecondSVDKS{S}$ depending on $\alpha$. There are also cases where the opposite
 is true, and generally, we see that PCG convergence
 deteriorates as $\alpha$ increases. This may suggest that, in some cases,
 capturing the eigensubspace of the error $\tilde E$ corresponding to the
smallest eigenvalues can be valuable over the that with largest singular values, which 
aligns well with our observations from \cref{sec:numerical_results:ichol}.
There are also several cases where the opposite is true and $\PrecondSVDKS{S}$
outperforms $\PrecondBregAlpha{S}{\alpha}$. Allowing a more fine-grained parameter
search of the variable $\alpha$ may be illuminating, to investigate whether there
exists a value $\alpha$ for which $\PrecondBregAlpha{S}{\alpha}$ performs similarly
to $\PrecondSVDKS{S}$. The question would then be whether this corresponds to the
case where
\[
\PrecondBregAlpha{S}{\alpha} \approx \PrecondSVDKS{S}.
\]
The results, however, suggest that there may be significant gains in allocating 
computational budget towards other truncations than a TSVD, although it is difficult
to determine an appropriate value of $\alpha$ in such cases.\\

Overall, the preconditioners $\PrecondBregAlpha{S}{\alpha}$ and $\PrecondSVDKS{S}$ are
expensive to compute, but vastly outperforms the randomised counterparts.
We see that many matrix-vector products are required for the convergence of the
Krylov--Schur method used to construct $\PrecondSVDKS{S}$ and $\PrecondBregAlpha{S}{\alpha}$. Although $\tilde E$ may be a dense matrix (but is never formed), the
application of $\tilde E$ to a vector may be cheap if both triangular solves with
$Q$ can be done efficiently and the matrix $S$ is sparse. The computation of 
$\PrecondBregAlpha{S}{\alpha}$ can therefore be fast for certain instances.
Construction time of the $\PrecondBregAlpha{S}{\alpha}$ increases with the number
of required matrix-vector products, which for small $\alpha$ generally is larger than
if $\alpha$ is small. There is an obvious trade-off between PCG convergence and the
time spent on the construction of $\PrecondBregAlpha{S}{\alpha}$, although the 
interplay between $r$ and $\alpha$ is not known in general.
If a given application requires multiple solves with the same matrix, then the
cost of construction of the preconditioner $\PrecondBregAlpha{S}{\alpha}$ is amortised
and therefore may be an attractive option.\\

Empirically, it appears
that $\PrecondBregAlpha{S}{\alpha}$
can reduce the number of PCG iterations, but this depends on the spectrum of $\tilde E$.
Knowledge of the spectrum, together with the overall computational budget (rank $r$,
fill of $Q$, cost of matrix-vector products and access model of $S$), informs the
value of $\alpha$.

{\renewcommand{\arraystretch}{1.0}
\begin{table}
\scriptsize
\caption{Results for \texttt{GHS/psdef\_crankseg\_1}.}\label{table:GHS_psdef_crankseg_1}
\begin{center}
\begin{tabular*}{0.73\textwidth}{l @{\extracolsep{\fill}} *{8}{c}}
\toprule 
Preconditioner & $r$ & $\alpha$ & Rel. residual & Iteration count & Construction ($s$) & Solve ($s$) & Matvecs\\
\midrule
\input{csvs/large/nys=0/GHS_psdef_crankseg_1_ichol_type=nofill_droptol=0_diagcomp=0_nnzS=10614210_nnzQ=5333507.csv}
\\[-\normalbaselineskip]\bottomrule
\end{tabular*}
\end{center}
\end{table}
}

{\renewcommand{\arraystretch}{1.0}
\begin{table}
\scriptsize
\caption{Results for \texttt{UTEP/Dubcova1}.}\label{table:Dubcova1}
\begin{center}
\begin{tabular*}{0.73\textwidth}{l @{\extracolsep{\fill}} *{8}{c}}
\toprule 
Preconditioner & $r$ & $\alpha$ & Rel. residual & Iteration count & Construction ($s$) & Solve ($s$) & Matvecs\\
\midrule
\input{csvs/large/nys=0/UTEP_Dubcova1_ichol_type=nofill_droptol=0_diagcomp=0_nnzS=253009_nnzQ=134569.csv}
\\[-\normalbaselineskip]\bottomrule
\end{tabular*}
\end{center}
\end{table}
}

{\renewcommand{\arraystretch}{1.0}
\begin{table}
\scriptsize
\caption{Results for \texttt{UTEP/Dubcova2}.}\label{table:Dubcova2}
\begin{center}
\begin{tabular*}{0.73\textwidth}{l @{\extracolsep{\fill}} *{8}{c}}
\toprule 
Preconditioner & $r$ & $\alpha$ & Rel. residual & Iteration count & Construction ($s$) & Solve ($s$) & Matvecs\\
\midrule
\input{csvs/large/nys=0/UTEP_Dubcova2_ichol_type=nofill_droptol=0_diagcomp=0_nnzS=1030225_nnzQ=547625.csv}
\\[-\normalbaselineskip]\bottomrule
\end{tabular*}
\end{center}
\end{table}
}

{\renewcommand{\arraystretch}{1.0}
\begin{table}
\scriptsize
\caption{Results for \texttt{GHS/psdef\_apache1.}}\label{table:GHS_psdef_apache1}
\begin{center}
\begin{tabular*}{0.73\textwidth}{l @{\extracolsep{\fill}} *{8}{c}}
\toprule 
Preconditioner & $r$ & $\alpha$ & Rel. residual & Iteration count & Construction ($s$) & Solve ($s$) & Matvecs\\
\midrule
\input{csvs/large/nys=0/GHS_psdef_apache1_ichol_type=nofill_droptol=0_diagcomp=0_nnzS=542184_nnzQ=311492.csv}
\\[-\normalbaselineskip]\bottomrule
\end{tabular*}
\end{center}
\end{table}
}

{\renewcommand{\arraystretch}{1.0}
\begin{table}
\scriptsize
\caption{Results for \texttt{Rothberg/cfd1}.}\label{table:Rothberg_cfd1}
\begin{center}
\begin{tabular*}{0.73\textwidth}{l @{\extracolsep{\fill}} *{8}{c}}
\toprule 
Preconditioner & $r$ & $\alpha$ & Rel. residual & Iteration count & Construction ($s$) & Solve ($s$) & Matvecs\\
\midrule
\input{csvs/large/nys=0/Rothberg_cfd1_ichol_type=nofill_droptol=0_diagcomp=0_nnzS=1825580_nnzQ=948118.csv}
\\[-\normalbaselineskip]\bottomrule
\end{tabular*}
\end{center}
\end{table}
}

{\renewcommand{\arraystretch}{1.0}
\begin{table}
\scriptsize
\caption{Results for \texttt{Schmid/thermal1.}}\label{table:Schmid_thermal1}
\begin{center}
\begin{tabular*}{0.73\textwidth}{l @{\extracolsep{\fill}} *{8}{c}}
\toprule 
Preconditioner & $r$ & $\alpha$ & Rel. residual & Iteration count & Construction ($s$) & Solve ($s$) & Matvecs\\
\midrule
\input{csvs/large/nys=0/Schmid_thermal1_ichol_type=nofill_droptol=0_diagcomp=0_nnzS=574458_nnzQ=328556.csv}
\\[-\normalbaselineskip]\bottomrule
\end{tabular*}
\end{center}
\end{table}
}

\section{Summary \& outlook}\label{sec:summary}

We have presented preconditioners based on low-rank truncations constructed as
minimisers of the Bregman log determinant divergence.
The motivation for this work was to systematically account for factorisation errors 
$S - QQ\Hermitian$ when designing preconditioners for the iterative solution of
linear systems \cref{eq:Sxb} involving the matrix $S$.
The proposed preconditioners are intended for "black box" computations, i.e. they do
not rely on any particular structure of $S$.\\

In \cref{sec:approximation_problem}, we showed how the Bregman log determinant divergence
generally leads to principal directions of a matrix that are different from a TSVD, and
\cref{sec:preconditioner} presented the associated preconditioner. We proposed an approximation
of the BLD truncation in \cref{sec:Bregman_directions} based on a combination of the
Nyström approximation and a Krylov--Schur method and analysed the resulting preconditioner
in \cref{sec:preconditioner:approx}.
\Cref{sec:numerical_results} contained numerous numerical examples comparing the exact
BLD truncations to an TSVD-based preconditioner (\cref{sec:numerical_results:ichol})
as well as demonstrating the effectiveness of approximations to our preconditioner in
\cref{sec:numerical_results:large}.\\

Our results depend heavily on the choice of factor $Q$ and how closely
$QQ\Hermitian$ resembles the target matrix $S$ (cf. the discussion in 
\cref{sec:preconditioner:approx}), and approximate factorisations come in many 
variants \cite{scott2023algorithms}. For instance, off-diagonal entries of a
Cholesky factorisation that are below a certain threshold can be set to zero in
the factorisation to force a degree of sparsity. An advantage of the framework 
is that while storage costs of $Q$ may be unpredictable as a function of drop 
tolerance, the $nr$ storage of the low-rank compensation term is deterministic.
For a given problem, it may be the case that specialised procedures alleviate
the need for compensation using a low-rank matrix. However, this may not be true
in general, and our results provide a novel framework for this should it be necessary.\\

In any case, it can be desirable to
construct $Q$ in a way that ensures positive semi-definiteness of $\tilde E$
leading to the preconditioner $\PrecondSVD{S}$ conceived in 
\cite{bock2023preconditioner}, which is much easier to approximate using
the Nyström approximation when $\tilde E$ is positive semi-definite. Another advantage
is that sketching can easily be parallelised, whereas it is not obvious to do so
for the Krylov--Schur method adopted above.
Investigating sparse approximate inverse preconditioners 
\cite{benzi1999comparative} could also be valuable in this context.
This relates to a major disadvantage of the proposed preconditioner. An
appropriate choice of rank $r$ that balances computational cost of the 
associated truncations with any gain related to convergence of PCG is not
known beforehand, and, as the experiments, show depends heavily on the 
spectrum of $\tilde E$, which in turn depends on $Q$. A thorough 
computational study comparing the preconditioners above to other
preconditioners in the literature is also necessary. \\

Theory is necessary to fully understand the nature of the truncations,
connections to PCG, and the results in \cref{sec:numerical_results}.
The experiments indicate that it is not always preferable to select 
truncations based on the largest singular values when considering preconditioners
for iterative solvers. One objective would be to understand the difference
between $\PrecondBreg{S}$ and $\PrecondBregRev{S}$, which is given by their
respective truncations of the error $\tilde E$. Using similar approaches
to \cite{higham2019new}, we leave a more detailed theoretical and numerical
study on how the choice of $Q$ affects the proposed preconditioner as future
work.\\

As alluded to in \cref{remark:exact}, investigating the effect of permutations
on the preconditioner in line with \cite{duff1989effect} could also be interesting. 
When $Q = \mathcal{P}\inv C$ is defined in terms of a fill-reducing permutation,
a reduction of the rank of $S - \mathcal{P}\inv C C\Hermitian \mathcal{P}\invtransp$
could be sought as part of the approximate factorisation. We can express this
problem as follows, where $\mathcal{S}_n$ is the group of symmetric permutations:
\begin{equation}\label{eq:P_Sn_problem}
\minimise_{P\in \mathcal{S}_n}\quad \rank(S - \mathcal{P}\inv C C\Hermitian \mathcal{P}\invtransp).
\end{equation}
Using the Ky-Fan norm, we can pose a convex relaxation of \cref{eq:P_Sn_problem}
\begin{align*}
\minimise_{P\in \mathcal{S}_n}\quad & \sum_{i=1}^n \sigma_i(S - \mathcal{P}\inv C C\Hermitian \mathcal{P}\invtransp),
\end{align*}
the solution of which is subject to future research. Chordal completions could also
be relevant to the  problem at hand. Suppose $QQ\Hermitian$ has a chordal
sparsity pattern given by the adjacency matrix $E$, then there exists a 
perturbation matrix $P$ for which $P A P\transp$ has a zero fill Cholesky 
factorisation. Finding minimal fill of a factorisation can therefore be 
viewed as a minimum chordal completion problem \cite{vandenberghe2015chordal}.

\section{Acknowledgements}
We thank the anonymous referees, whose comments and suggestions greatly improved this paper.
This work was supported by the Novo Nordisk Foundation under grant number NNF20OC0061894. 


\bibliographystyle{siam}
\bibliography{main}

\end{document}